\theoremstyle{plain}
\newtheorem{lem}{Lemma}[section]
\newtheorem{thm}[lem]{Theorem}
\newtheorem{cor}[lem]{Corollary}
\theoremstyle{definition}
\newtheorem{Def}[lem]{Definition}
\newtheorem{Rem}[lem]{Remark}
\newcommand{\mck}{\mathcal{K}}
\newcommand{\rp}{\mathbb{R}_{+}}
\newcommand{\Ll}{L_{loc}}
\newcommand{\Llw}{L_{loc,w}}
\newcommand{\bu}{\mathbf{u}}
\newcommand{\bb}{\mathbf{B}}
\newcommand{\bo}{\mathcal{B}_0}
\newcommand{\mb}{\mathcal{B}}
\newcommand{\ve}{\mathbb{V}}
\newcommand{\by}{\mathbf{y}}
\newcommand{\bym}{\mathbf{y}^m}
\newcommand{\h}{\mathbb{H}}
\newcommand{\byms}{\bym(s)}
\newcommand{\bn}{\mathbf{n}}
\newcommand{\bv}{\mathbf{v}}
\newcommand{\bw}{\mathbf{w}}
\newcommand{\mfa}{\mathcal{F}^a_+}
\newcommand{\mfl}{\mathcal{F}^+_{loc}}
\newcommand{\thl}{\Theta^{loc}_+}
 \DeclareMathOperator{\curl}{curl}
\DeclareMathOperator{\Div}{div} \DeclareMathOperator{\Span}{Span}
\DeclareMathOperator{\dist}{\mathbf{dist}}
\newcommand{\CQ}{\mathcal{Q}}
\title[Trajectory attractor for the MHD of Non-Newtonian Fluids]{Trajectory attractor for a non-autonomous Magnetohydrodynamic equation of
Non-Newtonian Fluids}
\author{Paul Andr\'e Razafimandimby}
\email{paulrazafi@gmail.com}
\address{Department of Mathematics and Information-Technology\\
Montan University of Leoben\\ Franz Josefstra\ss e 18, Leoben,
8700, Austria}
\begin{document}
\begin{abstract}
In this article we initiate the mathematical study of the dynamics
of a system of nonlinear partial differential equations modelling
the motion of incompressible, isothermal and conducting modified
bipolar fluids in presence of magnetic field. We mainly prove the
existence of weak solutions to the model. We also prove the
existence of a trajectory attractor to the translation semigroup
acting on the trajectories of the set of weak solutions and that
of external forces. Some results concerning the structure of this
trajectory attractor are also given. The results from this paper
may be useful in the investigation of some system of PDEs arising
from the coupling of incompressible fluids of $p$-structure and
the Maxwell equations.
\end{abstract}
\subjclass[2000]{76W05, 35D30, 35B40, 35B41, 35K55}
\keywords{Non-Newtonian fluids, Bipolar fluids, Shear thinning
fluids, Shear Thickening fluids, MHD, Magnetohydrodynamics, Weak
solution, Asymptotic behavior, Long-time behavior, Trajectory
attractor} \maketitle
\section{Introduction}
 For homogeneous incompressible fluids, the
constitutive law satisfies
\begin{equation*}
\mathbb{T}=-\pi\mathbf{1}+\hat{\mathbb{T}}(\mathcal{E}(\bu)),
\end{equation*}
where $\mathbb{T}$ is the Cauchy stress tensor, $\bu$ is the
velocity of the fluid, and $\pi$ is the undetermined pressure due
to the incompressibility
condition,  $\mathbf{1}$ is the identity tensor. The argument tensor $%
\mathcal{E}(\bu)$ of the tensor symmetric-valued function
$\hat{\mathbb{T}}$ is defined  through
\begin{equation*}
\mathcal{E}(\bu)=\frac{1}{2}\left(\mathbf{\ L}+\mathbf{L}^{\text{T}%
}\right),\quad \mathbf{\ L}=\nabla{ \bu},
\end{equation*}
where the T superscript denotes the matrix transpose.
Magnetohydrodynamics (MHD) is a branch of continuum mechanics
which studies the motion of conducting fluids in the presence of
magnetic fields. The system of Partial Differential Equations in
MHD are basically obtained through the coupling of the dynamical
equations of the fluids with the Maxwell's equations which is used
to take into account the effect of the Lorentz force due to the
magnetic field (see for example \cite{Chandrasekhar}). They play a
fundamental role in Astrophysics, Geophysics, Plasma Physics, and
in many other areas in applied sciences. In many of these, the MHD
flow exhibits a turbulent behavior which is amongst the very
challenging problems in nonlinear science. Because of these facts,
MHD has been the object of intensive scientific investigation.
Due to the folklore fact that the Navier-Stokes is
 an accurate model for the motion of incompressible in many practical situation, most of scientists have assumed that the fluids are Newtonian whose reduced stress tensor
$\hat{\mathbb{T}}(\mathcal{E}(\bu))$ is a linear function of $%
\mathcal{E}(\bu)$.

%  In most of these problems the fluids are Newtonian and the MHD  equations are basically the coupling of
% Assuming that the fluid is electrically conducting and the fluids are Newtonian, that is the $\hat{\mathbb{T}}$ is a linear function of $%
% \mathcal{E}(u)$, then we obtain through the coupling the basic magnetohydrodynamic (MHD) equations ().
% % So the basic MHD equations is, roughly speaking, obtained from
% % the coupling the Navier-Stokes equations and
% % the Maxwell equations and
% They play a fundamental role in Astrophysics, Geophysics, Plasma Physics, and in
% many other areas in applied sciences. In many of these, turbulent
% magnetohydrodynamic flows are typical.
%
% Many scientists believe the this basic MHD equations which is obtained form the coupling of the Navier-Stokes and Maxwell equations is an accurate model for
% the motion of incompressible and conducting viscous fluids in presence of magnetic field in many practical situations.

 However, there are many conducting materials
%such as polymer solutions, visco-elastic or visco-plastic fluids, paints and industrial fluids,  etc, that
 that cannot be characterized as Newtonian fluids. To
describe these media one generally has to use (conducting) fluid models that allow $%
\hat{\mathbb{T}}$ to be a nonlinear function  of $\mathcal{E}(\bu)$. Fluids in
the latter class are called Non-Newtonian fluids. We refer for example to the introduction of
Biskamp's book \cite{BISKAMP}
 for some examples of these Non-Newtonian
 conducting fluids. In \cite{LADY1} and \cite%
{LADY2}, Ladyzhenskaya considered a mathematical model for nonlinear fluids whose reduced
stress tensor $\hat{\mathbb{T}}(\mathcal{E}(\bu))$ satisfies
\begin{equation}\label{fluid-math}
\hat{\mathbb{T}}(\mathcal{E}(\bu))=2(\varepsilon+\mu_0|\nabla \bu|^{p-2})\frac{%
\partial \bu_i}{\partial x_j}, \,\, p\ge 2.
\end{equation}
Since then, this model has been the object of intensive
mathematical analysis which has generated several important
results. We refer to \cite{DU}, \cite{LIONS} for some relevant
examples. In \cite{DU} the authors emphasized important reasons
for considering such model. But the fluids or MHD models finding
source from \eqref{fluid-math} do not really have a meaning in
physics as they do not satisfy some principles of continuum
mechanics and thermodynamics. Necas, Novotny and Silhavy
\cite{NECAS1}, Bellout, Bloom and Necas \cite{BELLOUT2} have
developed the theory of multipolar viscous fluids which was based
on the work of Necas and Silhavy \cite{NECAS2}. Their theory is
compatible with the basic principles of thermodynamics such as the
Clausius-Duhem inequality and the principle of frame indifference,
and their results to date indicate that  the theory of
multipolar fluids may lead to a better understanding of
hydrodynamic
turbulence (see for example \cite{BELLOUT3}). Bipolar fluids whose reduced stress tensor $\hat{%
\mathbb{T}}(\mathcal{E}(\bu))$%=\tau(\mathcal{E}(\bu))$
 is defined by
\begin{equation}\label{tens-bip}
\hat{%
\mathbb{T}}(\mathcal{E}(\bu))=2\kappa_0(\varepsilon+ |\mathcal{E}(\bu)|^2)^{\frac{p-2}{2}}%
\mathcal{E}(\bu)-2\kappa_1\Delta\mathcal{E}(\bu),
\end{equation}
form a particular class of multipolar fluids.  If $1<p\le 2$ then
the fluids are said to be shear thinning, and shear thickening when
$2<p$.

Throughout our analysis, we suppose the existence of a tensor
valued function $\mathbf{T}:\mathbb{R}^{n\times
n}_{sym}\rightarrow \mathbb{R}^{n\times n}_{sym}$,  a scalar
function $\Sigma: \mathbb{R}^{n\times n}_{sym}\rightarrow
\mathbb{R}^+_0$, and constants $\nu_1,\nu_2$ such that for some
$p>1$ and for all $l, k, i, j=1,2,\ldots,n$, $\mathbf{D},
\mathbf{E}\in \mathbb{R}^{n\times n}_{sym}$:
\begin{align}
 \Sigma(0)=0,\quad \frac{\partial\Sigma(0)}{\partial D_{kl}}=\partial_{kl}\Sigma(0)=0,\quad \mathbb{T}_{kl}(\mathbf{D})=\partial_{kl}\Sigma(\mathbf{D}),
 \label{tens1}\\
\partial_{ij}\partial_{kl}\Sigma(\mathbf{D})E_{ij}E_{kl}\ge \nu_1 (1+\mathbf{D})^{p-2}|\mathbf{E}|^2, \label{tens2}\\
\partial_{ij}\partial_{kl}\Sigma(\mathbf{D})\le \nu_2 (1+|\mathbf{D}|^2)^{p-2}. \label{tens3}
\end{align}
Here
$$\mathbb{R}^{n\times n}_{sym}=\{\mathbf{D}\in \mathbb{R}^{n\times n}: D_{ij}=D_{ji}, i,j=1,2,\dots, n\}.$$

In the present work, we will consider a model of MHD arising from
the coupling of modified bipolar fluids and the Maxwell equations.
More precisely, we assume that the reduced stress tensor of the
fluids is given by
%\eqref{tens-bip} %  whose
% threduced
%  stress tensor
% $\hat{%
% \mathbb{T}}(\mathcal{E}(\bu))=\tau(\mathcal{E}(\bu))$ of the fluids is given by
\begin{equation}\label{fluid-phys}
\hat{\mathbb{T}}(\mathcal{E}(\bu))=2\kappa_0\mathbf{T}(\mathcal{E}(\bu))-2\kappa_1\Delta\mathcal{E}(\bu).
\end{equation}
Furthermore, let $\mathcal{Q}$ be a simply-connected, and bounded
domain of $\mathbb{R}^n$ ($n=2,3$) such that the boundary $\partial
Q$ is of class $C^\infty$. This will ensure the existence of a
normal vector $\bn$ at each of its point. In this article we
are aiming to give some mathematical results related to the
following non-autonomous partial differential equations:
%The 3-D stochastic \ma is described by the following system subject to the periodic (in space) boundary conditions:
\begin{equation}  \label{*}
\begin{cases}
\frac{\partial \bu}{\partial t}
-\nabla\cdot\hat{\mathbb{T}}(\mathcal{E}(\bu))+\bu\cdot \nabla
\bu+\mu \bb\times\curl \bb+ \nabla P =g(x,t)
 \text{ in } (0,T]\times \mathcal{Q},\\
\frac{\partial \bb}{\partial t}+S \curl \curl \bb+\mu \bu\cdot \nabla \bb-\mu \bb\cdot \nabla \bu=0 \text{ in } (0,T]\times \mathcal{Q}\\
\Div \bu=\Div \bb=0,\text{ in } (0,T]\times \mathcal{Q}\\
\bu=\tau_{ijl}\bn_j\bn_l=0 \text{ on }  (0,T)\times\partial\mathcal{Q},\\
\bb\cdot \bn=\curl \bb \times \bn =0 \text{ on } (0,T)\times  \partial\mathcal{Q},\\
\bu(0)=\bu_0, \,\,\, \bb(0)=\bb_0 \text{ in } \mathcal{Q},
\end{cases}%
\end{equation}
where $\bu=(u_i; i=1,\ldots, n)$, $\bb=(B_i;i=1,\dots, n)$ and $P$
are unknown functions defined on $\mathcal{Q}\times [0,T]$,
representing, respectively, the fluid velocity, the magnetic field
and the pressure, at each point of $\mathcal{Q}\times [0,T]$. The
vector $\bn$ represent the normal to $\partial \mathcal{Q}$. The
constants $S$ and $\mu$ are positive constants depending on the
Reynolds numbers of the fluids and the Hartman number. Finally,
$\tau_{ijl}$ is defined by
%%%%%%%%%%%%%%%%%%%%%%%%%%%%%%%%
%\begin{equation*}
% \mathcal{E}(\bu)=\frac{1}{2}\left(\frac{\partial \bu_i}{\partial x_j}+\frac{%
% \partial \bu_j}{\partial x_i}\right), |\mathcal{E}(\bu)|^2=\sum_{i,j=1}^n|%
% \mathcal{E}_{ij}(\bu)|^2,
% \end{equation*}
% \begin{equation*}
% \tau(\mathcal{E}(\bu))=2\kappa_0(\varepsilon +|\mathcal{E}(\bu)|^2)^{\frac{p-2}{2}}%
% \mathcal{E}(\bu)-2\kappa_1\Delta\mathcal{E}(\bu),
% \end{equation*}
%%%%%%%%%%%%%%%%%%%%%%%%%%%%%%%%%%%%%%%%%%%%%%%%%%%%%%%%%%%%%%%%%%%%%%%%%%%%%
%and
\begin{equation*}
\tau_{ijl}=\frac{\partial \mathcal{E}_{ij}(\bu)}{\partial x_l}.
\end{equation*}
%%%%%%%%%%%%%%%%%%%%%%%%%%%%%%%%%%%%%%%%%%%%%%%%%%%%%%%%%%%%%%%%%%%%%%%%%%%%%%%%%%%%%%%%%%%%%%%%
%%%%%%%%%%%%%%%%%%%%%%%%%%%%%%%%%%%%%%%%%%%%%%%%%%%%%%%%%%%%%%%%%%%%%%%%%%%%%%%%%%%%%%%%%%%%%%%%

%%%%%%%%%%%%%%%%%%%%%%%%%%%%%%%%%%%%%%%%%%%%%%%%%%%%%%%%%%%%%%%%%%%%%%%%%%%%%%%%
 The system \eqref{*} is a modified MHD equation that we obtained from the coupling of
the Maxwell equations and the non-autonomous dynamical equations
of a modified isothermal incompressible bipolar fluids. The
structure of the nonlinearity of problem \eqref{*} makes it as
interesting as any nonlinear evolution equations of mathematical
physics such as the basic MHD or the Navier-Stokes equations. In
addition to well-known tools from Navier-Stokes of MHD, new tools
need to be elaborated to handle \eqref{*}.

 When $\bb\equiv0$ then
\eqref{*} reduces to the PDEs describing the motion of isothermal
incompressible nonlinear bipolar fluids which has been thoroughly
investigated during the last two decades. Several important
results were obtained by prominent mathematical researchers (see,
among others, \cite{BELLOUT1}, \cite{BELLOUT4}, \cite{BELLOUT5},
\cite{MALEK}, \cite{MALEKetal}) in this direction of research.
% Despite these numerous  results there
% are still a lot of open problems related to the mathematical theory of
% multipolar fluids. Some examples are the existence of weak solution for all
% values of $p$, the uniqueness of such weak solutions and many more. We
% refer, for instance, to \cite{BELLOUT4}, \cite{RUZICKA} and \cite{MALEKetal}
% for some discussions about these challenges.
For $p=2,\,\, \kappa_1=0,$ \eqref{*} reduces to MHD equations
which has been the object of intensive mathematical research
since the pioneering work of Ladyzhenskaya and Solonnikov
\cite{LADY-SOLO}. We only cite \cite{STUPELIS},
\cite{TEMAM+SERMANGE}, \cite{LEBRIS}, \cite{DESJARDINS} for few
relevant examples; the reader can consult \cite{GERBEAU+LEBRIS}
for a recent and detailed review.  Assuming that $\kappa_1=0$
Samokhin studied the MHD equations arising  from the coupling of
the Ladyzhenskaya model with the Maxwell equations in
\cite{SAMOKHIN}, \cite{SAMOKHIN2}, \cite{SAMOKHIN3}, and
\cite{SAMOKHIN4}. In these papers he  proved the existence of weak
solution of the model for $p\ge 1+\frac{2n}{n+2}$. Later on
Gunzburger and his collaborators generalized the settings of
Samokhin by taking a fluid with a stress tensor having a more
general $p$-structure in \cite{GUNZBURGER} and \cite{GUNZBURGER2}.
The authors of the later papers analyzed the well-posedness and
the control of \eqref{*} still in the case where $\kappa_1=0$ and
$p\ge 1+\frac{2n}{n+2}$.

In this work we assume that $\{\kappa_1\neq 0, \bb\not \equiv 0,
p\in (1,1+\frac{2n}{n+2}]\}$ we are interested in the analysis of
the long-time dynamics of the weak solutions to \eqref{*} which is
very important for the understanding of the global temporal
behavior and the physical features (such as the turbulence in
hydrodynamics) of the model. We refer, for instance, to
\cite{BABIN+VISHIK}, \cite{ROBINSON}, and \cite{TEMAM-INF} for
some results in this direction for the case of autonomous
Navier-Stokes and other autonomous equations of mathematical
physics.

In the third section of this paper, we first give an existence result
for the weak solutions to \eqref{*} in both two and three
dimensional space and $p \in (1,\frac{2n+6}{n+2}]$. More
precisely, by means of a blending of the Galerkin and
monotonicity-compactness methods we could prove the following
\begin{thm}
Let $\mathbf{T}$ and $\Sigma$ satisfy \eqref{tens1}-\eqref{tens3},
$(\bu_0;\bb_0)\in \h$, $g\in L^2(0,T;\ve^\ast)$ and  $p\in
(1,\frac{2n+6}{n+2}]$. Then there exists at least one pair $(\bu;
\bb)\in L^\infty(0,T;\h)\cap L^2(0,T;\ve)$ such that
\begin{equation*}
 \left(\frac{\partial \bu}{\partial t}; \frac{\partial \bb}{\partial t}\right)\in L^2(0,T;\ve^\ast),
\end{equation*}
and \eqref{*} holds in the weak sense.
\end{thm}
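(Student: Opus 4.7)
The plan is to proceed by a Faedo--Galerkin approximation combined with the monotonicity method adapted to the bipolar structure. First I would fix a Galerkin basis $\{\mathbf{w}_i\}$ of the velocity component of $\ve$ given by the eigenfunctions of a Stokes--type operator associated with the principal part $-\Delta\mathcal{E}(\cdot)$ under the boundary conditions $\bu=\tau_{ijl}\bn_j\bn_l=0$, together with an orthonormal basis $\{\mathbf{d}_j\}$ of the space of solenoidal fields satisfying $\bb\cdot\bn=0$ and $\curl\bb\times\bn=0$, and look for approximate solutions $(\bu^m,\bb^m)$ in the span of the first $m$ modes. Local existence of the resulting ODE system follows from Carath\'eodory's theorem; the a priori estimates below then extend these solutions to all of $[0,T]$.

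Testing the approximate equations with $(\bu^m,\mu\bb^m)$ and invoking the standard MHD cancellation between the Lorentz term $\mu\bb^m\times\curl\bb^m$ and the stretching/convection terms $\mu\bu^m\cdot\nabla\bb^m-\mu\bb^m\cdot\nabla\bu^m$, together with the coercivity of $\mathbf{T}$ implied by \eqref{tens1}--\eqref{tens2}, yields the schematic energy inequality
\begin{equation*}
\tfrac{1}{2}\tfrac{d}{dt}\bigl(\|\bu^m\|_{L^2}^2+\mu\|\bb^m\|_{L^2}^2\bigr)+c\bigl(\|\bu^m\|_{W^{1,p}}^p+\kappa_1\|\nabla\mathcal{E}(\bu^m)\|_{L^2}^2+S\|\curl\bb^m\|_{L^2}^2\bigr)\le C\|g\|_{\ve^\ast}^2,
\end{equation*}
and a Gronwall argument produces uniform bounds in $L^\infty(0,T;\h)\cap L^2(0,T;\ve)$. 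I would then estimate the time derivatives $(\partial_t\bu^m,\partial_t\bb^m)$ in $L^2(0,T;\ve^\ast)$ by duality, controlling $\mathbf{T}(\mathcal{E}(\bu^m))$ via \eqref{tens3} and the convective and Lorentz contributions by Sobolev interpolation between $L^\infty(0,T;\h)$ and $L^2(0,T;\ve)$. It is at this step that the upper bound $p\leq\frac{2n+6}{n+2}$ enters, balancing the nonlinear growth of $\mathbf{T}$ against the regularity afforded by the bipolar term.

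With these uniform bounds the Aubin--Lions lemma yields, along a subsequence still labelled $(\bu^m,\bb^m)$, weak convergence in $L^2(0,T;\ve)$, weak-$\ast$ convergence in $L^\infty(0,T;\h)$, and strong convergence in $L^2(0,T;\w)$ for an intermediate space $\w$ with $\ve\hookrightarrow\w\hookrightarrow\h$ compactly. In particular $\mathcal{E}(\bu^m)\to\mathcal{E}(\bu)$ and $\curl\bb^m\to\curl\bb$ strongly in $L^2((0,T)\times\CQ)$ up to a further subsequence, which suffices to pass to the limit in the distributional sense in the quadratic terms $\bu^m\cdot\nabla\bu^m$, $\bu^m\cdot\nabla\bb^m$, $\bb^m\cdot\nabla\bu^m$ and $\bb^m\times\curl\bb^m$.

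The main obstacle is the identification of the weak limit of $\mathbf{T}(\mathcal{E}(\bu^m))$ with $\mathbf{T}(\mathcal{E}(\bu))$, since $\mathbf{T}$ is nonlinear. I would handle this by combining the almost everywhere convergence of $\mathcal{E}(\bu^m)$ extracted from the strong $L^2(0,T;H^1)$ convergence ensured by the bipolar regularity with the growth bound \eqref{tens3} via a Vitali-type argument; alternatively, one can invoke the Minty--Browder monotonicity trick, exploiting the monotonicity of $\mathbf{T}$ forced by the convexity of $\Sigma$ in \eqref{tens2} together with a limsup passage in the quantity $\int_0^T\langle\mathbf{T}(\mathcal{E}(\bu^m)),\mathcal{E}(\bu^m)\rangle\,dt$ read off from the Galerkin energy identity. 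Once the nonlinear term is identified, the attainment of the initial conditions $\bu(0)=\bu_0$ and $\bb(0)=\bb_0$ follows from the Lions--Magenes lemma, which provides continuity of $(\bu,\bb)$ into $\h$ after redefinition on a null set.
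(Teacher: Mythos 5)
Your proposal follows essentially the same route as the paper: a Galerkin scheme built on the spectral bases of the fourth-order bipolar operator and the curl--curl operator, the standard MHD energy cancellation giving uniform $L^\infty(0,T;\h)\cap L^2(0,T;\ve)$ bounds, a duality estimate on the time derivatives (where the restriction $p\le\frac{2n+6}{n+2}$ indeed enters), Aubin--Lions compactness, and the Minty--Browder monotonicity trick to identify the weak limit of $\mathbf{T}(\mathcal{E}(\bu^m))$ --- which is exactly the paper's argument. One small caveat: since $\bb^m$ is bounded only in $L^2(0,T;\mathbb{H}^1(\mathcal{Q}))$, you cannot conclude strong $L^2$ convergence of $\curl\bb^m$; but this is not needed, because strong $L^2$ convergence of $\bb^m$ combined with weak convergence of its derivatives already suffices to pass to the limit in the quadratic magnetic terms, as in the paper.
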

The notations used in the above statement will be explained clearly in the next two sections.

The long-time behavior of weak solutions to non-autonomous
evolutionary PDEs \eqref{*} is analyzed in the last section. 
The classical and well-known approach
explained in the monographs \cite{BABIN+VISHIK}, \cite{ROBINSON},
and \cite{TEMAM-INF} does not apply here since we are considering
a non-autonomous system. Moreover, when $p\in
(1,\frac{2n+6}{n+2}]$ then we do not have a uniqueness result.
Therefore, the classical theory of forward attractor developed
in \cite{VISHIK+CHEPYZHOV-94}, \cite{Haraux}, \cite{VISHIK} and the concept of pullback attractor initially elaborated in
\cite{Crauel, Craueletal, Schmalfuss} for non-autonomous
dissipative dynamical systems does not apply here. To overcome the difficulties associated with 
the possible non-uniqueness of solution in the analysis of dynamical system, on can usually use three methods. The first one is the method of 
generalized flow which was initially developed in \cite{Ball-1978}. 
The second approach is the theory of multivalued semi-flows which has already found in the literature in 1948
(see, for instance, \cite{Barbashin-1948}). However, this method was applied to the analysis of the asymptotic behavior of PDEs for the first time
 in 
\cite{Babin+Vishik-1985}
 and extended later on in other work (\cite{Babin-1995}, \cite{Melnik+Valero-1998}). One can also cite the paper \cite{Caraballo+al-2003b} which extended the notion of pullback attractor
 to nonautonomous and stochastic multivalued 
 dynamical systems. The third method is the theory of trajectory attractor which was introduced in \cite{VISHIK+CHEPYZHOV} , 
 \cite{Malek+Necas-1996}, \cite{Sell-1996}. Due to its simplicity this method has
become very popular and used in many works but we only mention
\cite{VISHIK+CHEPYZHOV}, \cite{VISHIK+CHEPYZHOV-2002},
\cite{VISHIK+CHEPYZHOV+WENDLAND}, \cite{Kapustyan}, and
\cite{ZHOU} for few relevant results. We can find a comparison of the two first  and the three methods in \cite{Caraballo+al-2003a} and 
 \cite{Kapustyan-2010}, respectively. One can also find a recent review on the three methods in \cite{Caraballo+al-2010}. 
 In the present work we use the third approach and we show the
existence of the trajectory attractor of weak solutions to
\eqref{*} and give some partial results related to its structure.
More precisely,  we obtain the following which will be presented
in detail in  Section 4.
\begin{thm}%\label{TRAJ-THM}
Let $g_0$ be translation bounded in $\Ll^2(\rp; \ve^\ast)$ (that
is, it is translation compact in $\Llw^2(\rp; \ve^\ast)$), then
the translation semigroup $\{\mathbb{S}_t, t\ge 0\}$
 acting on $\mck_{\Sigma(g_0)}^+$ possesses a uniform trajectory attractor $\mathcal{A}_{\Sigma(g_0)}$. The set $\mathcal{A}_{\Sigma(g_0)}$
 is bounded in $\mathcal{F}^a_+$ and compact in $\Theta^{loc}_+$. It is strictly invariant:
\begin{equation}
 \mathbb{S}_t\mathcal{A}_{\Sigma(g_0)}=\mathcal{A}_{\Sigma(g_0)},
\end{equation}
for all $t\ge 0$. Furthermore, we have
 \begin{equation}
\mathcal{A}_{\Sigma(g_0)}=\mathcal{A}_{\omega(\Sigma(g_0))},
% =\Pi_{+}\cup_{\zeta
% \in Z(g_0)}\mck_{\zeta}=\Pi_+ \mck_{Z(g_0)}.
 \end{equation}
 where $\mathcal{A}_{\omega(\Sigma(g_0))}$ is the uniform (with respect to $g\in \omega(\Sigma(g_0))$)
  attractor of $\{\mck_g^+, g\in \omega(\Sigma(g_0) \}$.

Moreover, for any set $\mathbb{B}\subset\mck^+_{\Sigma(g_0)}$ which is bounded in $\mfa$ we have
\begin{align}
 \dist_{C(0,T; D(\mathcal{A}^{-\delta}))}(\Pi_{0,T}\mathbb{S}_T\mathbb{B}, \Pi_{0,T}\mathbb{K}_{Z(g_0)})\rightarrow 0 \text{ as } t\rightarrow \infty,\\
 \dist_{L^2(0,T; D(\mathcal{A}^{\delta}))}(\Pi_{0,T}\mathbb{S}_T\mathbb{B}, \Pi_{0,T}\mathbb{K}_{Z(g_0)})\rightarrow 0 \text{ as } t\rightarrow \infty.
\end{align}
% The kernel $\mck_\zeta$ is not empty for any $\zeta \in Z(g_0)$, and the set $\mck_{Z(G_))}$ is bounded in $\mathcal{F}^a$ and compact in
%    $\Theta^{loc}_+$.
\end{thm}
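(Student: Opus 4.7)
The plan is to cast the problem in the Chepyzhov–Vishik trajectory-attractor framework and verify the three ingredients of that abstract machinery: a translation-invariant trajectory space, continuity of the shift semigroup in the local topology $\thl$, and a uniformly absorbing set that is compact in $\thl$ and bounded in $\mfa$. First I would define, for each admissible symbol $g \in \Sigma(g_0)$, the trajectory space $\mck_g^+$ as the set of all weak solutions $(\bu,\bb)$ on $\rp$ produced by the existence theorem above, and then set $\mck_{\Sigma(g_0)}^+ = \bigcup_{g\in\Sigma(g_0)} \mck_g^+$. Translation invariance $\mathbb{S}_t \mck_{\Sigma(g_0)}^+ \subset \mck_{\Sigma(g_0)}^+$ follows from the corresponding invariance of the hull $\Sigma(g_0)$ under time shift together with the fact that restrictions of weak solutions are weak solutions of the shifted problem.

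The next step is to derive dissipative a priori estimates. Testing \eqref{*} with $(\bu,\bb)$, using the coercivity assumption \eqref{tens2} to bound the $\ve$-part of the stress from below, the antisymmetry of the cross and convection terms to cancel the coupling between velocity and magnetic field, and Young's inequality on the forcing pairing $\langle g,\bu\rangle$, leads to an energy differential inequality of the form $\tfrac{d}{dt}\|(\bu,\bb)\|_{\h}^2 + c\|(\bu,\bb)\|_{\ve}^p \le C\|g\|_{\ve^*}^2$. Using that $g_0$ is translation bounded in $\Ll^2(\rp;\ve^*)$ one then extracts uniform-in-time bounds of each trajectory in $L^\infty_{loc}(\rp;\h)\cap L^p_{loc}(\rp;\ve)$, with the time derivative bounded in $L^2_{loc}(\rp;\ve^*)$ via \eqref{tens3} and the standard control of the nonlinear terms. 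These estimates identify a ball in $\mfa$ that absorbs every bounded subset of $\mck_{\Sigma(g_0)}^+$ in finite time.

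Compactness of this absorbing ball in $\thl$ is handled by an Aubin–Lions argument applied on each finite interval $[0,T]$: the $L^p(0,T;\ve)$-bound combined with $\partial_t$ bounded in $L^2(0,T;\ve^*)$ and the compact embedding $\ve \hookrightarrow \h$ provides strong compactness in $C(0,T;D(\A^{-\delta}))$, while weak-$\ast$ lower semicontinuity gives the remaining bounds. Closedness of the trajectory space under $\thl$-limits requires passing to the limit in the nonlinear stress term $\mathbf{T}(\mathcal{E}(\bu))$ and in the convective/Lorentz terms; this is exactly where the monotonicity–compactness argument used to prove the existence theorem must be reused, and it is the main obstacle of the proof, since non-uniqueness for $p\in(1,\tfrac{2n+6}{n+2}]$ forbids any single-valued semigroup arguments and forces one to work with the full multivalued trajectory flow.

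Once these verifications are in place, the abstract trajectory attractor theorem from \cite{VISHIK+CHEPYZHOV-2002} delivers the existence of $\mathcal{A}_{\Sigma(g_0)}$, its $\mfa$-boundedness, $\thl$-compactness and the strict invariance identity $\mathbb{S}_t \mathcal{A}_{\Sigma(g_0)} = \mathcal{A}_{\Sigma(g_0)}$. The identification $\mathcal{A}_{\Sigma(g_0)} = \mathcal{A}_{\omega(\Sigma(g_0))}$ then follows from the translation compactness of $g_0$, which implies that the $\omega$-limit of the symbol in $\Llw^2$ is nonempty and that only symbols in $\omega(\Sigma(g_0))$ contribute to the asymptotic dynamics; this is the standard Chepyzhov–Vishik reduction to the $\omega$-limit hull. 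Finally, the two convergence statements for bounded $\mathbb{B}\subset\mck^+_{\Sigma(g_0)}$ are obtained by writing $\mathcal{A}_{\Sigma(g_0)}=\Pi_+\mathbb{K}_{Z(g_0)}$ (the kernel of the family of complete bounded trajectories) and combining the $\thl$-attraction property with the compact embedding $D(\A^\delta)\hookrightarrow \h \hookrightarrow D(\A^{-\delta})$ together with the uniform $\mfa$-bound on the absorbing set, which upgrades the attraction from the local topology to the stronger norms $C(0,T;D(\A^{-\delta}))$ and $L^2(0,T;D(\A^{\delta}))$.
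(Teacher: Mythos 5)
Your overall skeleton is the same as the paper's: verify the Chepyzhov--Vishik hypotheses (compact complete symbol space, continuity and translation coordination of $\mathbb{S}_t$, a uniformly attracting set bounded in $\mfa$ and compact in $\thl$, closedness of the trajectory family) and then invoke the abstract trajectory-attractor theorem, followed by the reduction to $\omega(\Sigma(g_0))$ and the kernel representation $\Pi_+\mathbb{K}_{Z(g_0)}$ with compact embeddings giving the strong attraction statements. However, there are two concrete gaps.

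First, your energy inequality is of the wrong form for this model, and the error propagates into the key absorbing estimate. You claim that \eqref{tens2} yields dissipation $c\|(\bu,\bb)\|_{\ve}^{p}$ and hence bounds in $L^{p}_{loc}(\rp;\ve)$. But here $\ve_1$ is an $\h^2$-type space: the coercivity in $\ve$ comes entirely from the \emph{bipolar} term $-2\kappa_1\Delta\mathcal{E}(\bu)$ and the $S\curl\curl$ term, i.e.\ from the linear operator $\mathcal{A}$, which is quadratic; the $p$-structure operator $\mathcal{A}_p$ is used only through its nonnegativity $\langle\mathcal{A}_p\Phi,\Phi\rangle\ge 0$ and is simply discarded on the left-hand side. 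The correct inequality is $\frac{d}{dt}|\by|^2+\|\by\|^2\le\|g\|_{\ve^\ast}^2$, and it is the quadratic dissipation combined with the Poincar\'e-type inequality $\lambda|\by|^2\le\|\by\|^2$ that produces the exponential decay $e^{-\lambda t}$ in the absorbing estimate of Lemma \ref{TRAJ-lem}; a $p$-power dissipation with $p<2$ would give neither the required $L^2_b(\rp;\ve)$ bound entering $\|\cdot\|_{\mfa}$ nor exponential absorption. Relatedly, "standard control of the nonlinear terms" hides the genuinely nontrivial point for $p\in(2,\frac{2n+6}{n+2}]$: bounding $\mathcal{A}_p\by$ in $L^2_b(\rp;\ve^\ast)$ requires the Gagliardo--Nirenberg interpolation of Remark \ref{rem-1}, and this is precisely what fixes the upper limit $\frac{2n+6}{n+2}$ on $p$.

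Second, you define $\mck_g^+$ as the set of weak solutions "produced by the existence theorem." In the non-uniqueness regime the trajectory class must be defined intrinsically as the set of weak solutions satisfying the integrated energy inequality \eqref{NRJ}: this is what makes $\mck_g^+$ closed under $\thl$-limits (the inequality survives weak lower semicontinuity, whereas an energy \emph{equality} or the vague class "Galerkin limits" would not), and it is the only class for which the uniform dissipative estimate of Lemma \ref{TRAJ-lem} is available. One must then separately check nonemptiness, i.e.\ that Galerkin limits do satisfy \eqref{NRJ}; this is the content of Lemma \ref{lem-traj2}(i). Finally, note that compactness of the absorbing ball in $\thl$ needs no Aubin--Lions argument, since $\thl$ carries a weak topology and bounded sets of $\mfa$ are automatically precompact there; Aubin--Lions (via Simon's and Lions's compactness lemmas) enters only at the last step, to upgrade $\thl$-attraction to attraction in $C(0,T;D(\mathcal{A}^{-\delta}))$ and $L^2(0,T;D(\mathcal{A}^{\delta}))$, as you correctly indicate.
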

We should mention that even if we drew our inspiration from
\cite{VISHIK+CHEPYZHOV}, \cite{DUVAUT+LIONS}, and
\cite{TEMAM+SERMANGE} the problem we treated here does not fall in
the framework of these main references.
 Besides the usual nonlinear terms of the MHD equations
it contains another nonlinear term of $p$-structure  which
exhibits the non-linear relationships between the reduced stress
and the rate of strain $\mathcal{E}(\bu)$ of the conducting
fluids. Because of this, the analysis of the behavior of the MHD
model \eqref{*} tends to be much more complicated and subtle than
that of the Newtonian MHD equations. Hence, we have had to invest
much effort  to prove many important results which do not follow
from the analysis in the latter works or we could not find in the
literature. All of our findings rely on these crucial results and
they are presented in Section 2.

 To the best of our knowledge the present article is the first to deal with \eqref{*}
 in the setting $\{\kappa_1\neq 0, \bb\not \equiv 0, p\in (1,1+\frac{2n}{n+2}]\}$.
In this sense, many topics and problems are still open.
Some examples of challenges we may address in future research are
the existence of weak solution for all values of $p$, the
uniqueness of such weak solutions. We may also want to study the
behavior of the weak solution we obtained in this paper as
$\kappa_1$ approaches to 0 but fixing $p \in (1,2)$; this is a
very interesting research topic and may lead to an extension of
the results obtained in \cite{SAMOKHIN}, \cite{SAMOKHIN2},
\cite{SAMOKHIN3}, \cite{SAMOKHIN4}, \cite{GUNZBURGER} and
\cite{GUNZBURGER2}. These few examples of research topics are
taken as an analogy of the problems still unsolved in the
mathematical theory of generalized Non-Newtonian fluids as reported in
\cite{BELLOUT4}, \cite{RUZICKA} and \cite{MALEKetal}. All of these
questions are very difficult and beyond the scope of this paper,
thus we will just limit ourselves with giving a suitable
mathematical setting for \eqref{*} and partial results related to
the dynamics of the weak solutions. However, we hope that our work
will find its applications elsewhere.

% refer, for instance, to
% for some discussions about these challenges.
%  Some challenges encountred in the
% mathematical theory of bipolar fluids and generalized Newtonian fluids are reported in \cite{BELLOUT4}, \cite{RUZICKA} and \cite{MALEKetal}. In analogy of these
% for some discussions about these challenges.
%
% In the analogy of the We should also mention
% that our paper can be used for the analysis of \eqref{*} when
% $\kappa_1=0$ for the case $p\in (1,2)$. We will address this project
% for future investigation as it requires a the elaboration of new
% tools which is beyond the
%  scope of this article.

The organization of this article is as follows. We introduce the necessary notations for the mathematical theory of
\eqref{*} in the next section; these notations enabled us to rewrite \eqref{*} into an abstract evolution equations.
In the very same section we also give very important results that we could not find in the literature. Our principal claims rely very much on these new tools. Section 3 is devoted to the proof of the existence of weak solutions to
 \eqref{*} whose long-time behavior will be investigated in the last section of the article.

\noindent \newblock{\em Hereafter we should make the convention
that the problem \eqref{*} is characterized by its data:
\begin{enumerate}
 \item physical data: $\kappa_0, \kappa_1,\mu, \nu_1,\nu_2, S,  p$, $n$,
 $\mathcal{Q}$,
\item initial conditions $\bu_0$ and $\bb_0$,
\item and the
external force $g$.
\end{enumerate}
Unless the contrary is mentioned, any positive constant ($C$,
$\widetilde{C}_i$, $C_i$, $\lambda$, etc) depend at most on the
data of \eqref{*}. Moreover, they may change form one term to the
other.}
\section{Preliminary: Notations and hypotheses}\label{Prel}

We introduce some notations and background following the
mathematical theory
of hydrodynamics (see for instance \cite{MALEKetal}). For any $p\in [1,\infty)$, $\mathbb{L}^p(\mathcal{Q})$ and $\mathbb{W}^{m,p}(\mathcal{Q}%
)$ are the spaces of functions taking values in $\mathbb{R}^n$
such that each component belongs to the Lebesgue space
$L^p(\mathcal{Q})$ and the Sobolev spaces $W^{m,p}(\mathcal{Q})$,
respectively. For $p=2$ we use $\h^m(\mathcal{Q})$ to describe
 $\mathbb{W}^{m,p}(\mathcal{Q})$.
The symbols $|\cdot|$ and $(.,.)$ are the $\mathbb{L}^2$-norm and $\mathbb{L}^2$-inner product, respectively. The norm of $\mathbb{W}^{m,p}(\mathcal{Q}%
)$ is denoted by $||\cdot||_{m,p}$. As usual,
$\mathbb{C}^\infty_0(\mathcal{Q})$ is the space of infinitely
differentiable functions having compact support contained in
$\mathcal{Q}$. The space $\mathbb{W}^{p,m}_0(\mathcal{Q})$ is the closure of
$\mathbb{C}^\infty_0(\mathcal{Q})$ in $\mathbb{W}^{p,m}(\mathcal{Q})$. Now we
introduce the following spaces
\begin{align*}
 \mathcal{V}_1 &=\left\{\bu\in \mathbb{C}^\infty_0(\mathcal{Q}): \Div \bu=0\right\},\\
\mathbb{H}_1 &=\left\{\bu \in \mathbb{L}^2(\mathcal{Q}): \Div \bu =0, \bu\cdot \bn= 0 \text{ on } \partial \mathcal{Q} \right\},\\
%\mathbb{V}_{\sigma,p}&=\left\{ \bu \in \mathbb{W}^{\sigma,p}: \Div \bu=0, \bu=0 \text{ on } \partial \mathcal{Q}\right\},\\
\mathbb{V}_1&=\left\{ \bu \in \mathbb{H}^2(\mathcal{Q}): \Div
\bu=0, \bu=\frac{\partial \bu }{\partial \bn}=0 \text{ on }
\partial \mathcal{Q}\right\}.
\end{align*}
We also set
\begin{align*}
 \mathcal{V}_2&=\left\{\bb \in \mathbb{C}^\infty_0(\mathcal{Q}): \Div \bb=0; \bb\cdot \bn=0 \text{ on } \partial \mathcal{Q}\right\},\\
\mathbb{H}_2&=\text{ the closure of $\mathcal{V}_2$ in $\mathbb{L}^2(\mathcal{Q})$},\\
\mathbb{V}_{2}&=\left\{ \bb\in \mathbb{H}^1(\mathcal{Q}): \Div
\bb=0; \bb\cdot \bn=0 \text{ on } \partial \mathcal{Q}\right\}.
\end{align*}
Note that
\begin{equation*}
 \mathbb{H}_1=\mathbb{H}_2.
\end{equation*}

The spaces $\mathbb{H}_i, i=1,2$ are equipped with the scalar
product and norm induced by $\mathbb{L}^2(\mathcal{Q})$.

% Thanks to Poincar\'e's inequality we can endow the space
% $\mathbb{V}_{1,p}$ with the norm $||\bu ||_{1,p}$ defined by
% \begin{equation*}
%  ||\bu||^p_{1,p}=\int_\mathcal{Q} |\nabla \bu|^p dx.
% \end{equation*}
% This norm is equivalent to the usual $\mathbb{W}^{1,p}$-norm on
% $\mathbb{V}_{1,p}$.

We endow the space $\mathbb{V}_{1}$ with the norm $||\cdot||_{1}$
generated by the scalar product
\begin{equation*}
 ((\bu, \mathbf{v}))_1=\int_\mathcal{Q} \frac{\partial \mathcal{E}_{ij}(\bu)}{%
\partial x_k}\frac{\partial \mathcal{E}_{ij}(\bv)}{\partial x_k} dx,
\end{equation*}
where \begin{equation*}
\mathcal{E}(\bu)=\frac{1}{2}\left(\mathbf{\ L}+\mathbf{L}^{\text{T}%
}\right),\quad \mathbf{\ L}=\nabla{ \bu}.
\end{equation*}
It is shown in \cite{BELLOUT5} that this scalar product generates a norm $||\cdot||_1$ which is equivalent to the usual $\h^2(\mathcal{Q})$-norm on $\ve_1$.
More precisely, there exists two positive constants $K_1$ and $K_2$ depending only on $\CQ$ such that
\begin{equation}\label{eq-bloom}
 K_1 ||\bu||_{2,2}\le ||\bu||_1\le K_2 ||\bu||_{2,2},
\end{equation}
for any $\bu\in \ve_1$.

On $\mathbb{V}_{2}$ we define the scalar product
\begin{equation*}
 ((\bu, \mathbf{v}))_2=(\curl \bu, \curl \mathbf{v}),
\end{equation*}
which coincides with the usual scalar product of
$\mathbb{H}^1(\mathcal{Q})$.

Let
\begin{align*}
 \mathbb{V}=\mathbb{V}_{1}\times \mathbb{V}_2,\\
\mathbb{H}=\mathbb{H}_1\times \mathbb{H}_2.
\end{align*}
The spaces $\mathbb{H}$ and $\ve$ have the structure of Hilbert
spaces when equipped respectively with the scalar products
\begin{equation*}%\label{eq1}
 (\Phi,\Psi)=(\bu, \mathbf{v})+(\bb, \mathbf{C}), \forall \Phi=(\bu; \bb), \Psi=(\mathbf{v}; \mathbf{C})\in
\mathbb{H},
\end{equation*}
 and
 \begin{equation*}
((\Phi,\Psi))=2\kappa_1 ((\bu,\bv))_1+S((\bb,\mathbf{C}))_2, \forall
\Phi=(\bu; \bb), \Psi=(\mathbf{v}; \mathbf{C})\in \mathbb{V}.
 \end{equation*}

The norms on $\h$ and $\mathbb{V}$ are respectively defined by
\begin{equation*}
|\Phi|=\sqrt{|\bu|^2+|\bb|^2}, \forall \Phi=(\bu;\bb)\in \h,
\end{equation*}
and
\begin{equation*}%\label{eq2}
 ||\Phi||=\sqrt{2\kappa_1||\bu||^2_{1}+S||\bb||^2_2},\forall \Phi=(\bu; \bb)\in \ve.
\end{equation*}

\begin{Rem}\label{Rem1}
Note that the norm $||\Phi||$ is equivalent to the norm defined by
\begin{equation*}
 [[\Phi]]^2=||\bu||^2_{1}+||\bb||^2_2.
\end{equation*}
We should also mention that a Poincar\'e'-like inequality holds for the space $\h$ and $\ve$, that is there exists a positive constant $\lambda$ ( dependong only on $\mathcal{Q}$)
 such that
\begin{equation}\label{Poincare}
 \lambda |\bu|^2\le ||\bu||^2,
\end{equation}
for any $\bu\in \ve$.
\end{Rem}
% We also need the following space
% $$\w=\ve_1\times \h_1,$$
% which is equipped with the norm defined by
% $$||\Phi||_\w=\sqrt{||\bu||_1^2+|\bb|}, $$
% for any $\Phi=(\bu;\bb)\in \ve_1\times \h_1$.
% Throughout this work we set
% \begin{equation*}
%  ||\Phi||_\ve^{p,2}=||\bu||^p_{1,p}+||\bb||^2_2.
% \end{equation*}

For any Banach space $X$ we denote by $X^\ast$ its dual space and
$\langle \phi, \bu\rangle$ the value of $\phi\in X^\ast$ on $\bu
\in X$. Identifying $\mathbb{H}$ with its dual, we have the following
Gelfand chain
\begin{equation*}
 \mathbb{V}\subset \mathbb{H}\equiv \h^\ast \subset \mathbb{V}^\ast,
\end{equation*}
 where each space is densely and compactly embedded into the next one. This chain of embedding enables us to write
\begin{equation*}
 \langle \bu,\bv\rangle=(\bu,\bv),
\end{equation*}
for any $\bu\in \h$ and $\bv\in \ve$.

Let $\mathcal{A}_1$ be a linear operator defined through the
relation
\begin{equation*}
\langle \mathcal{A}_1\bu,\bv\rangle=a(\bu,\bv)=2\kappa_1\int_D \frac{\partial \mathcal{E}_{ij}(\bu)}{%
\partial x_k}\frac{\partial \mathcal{E}_{ij}(\bv)}{\partial x_k} dx, \,\, \bu\in
D(\mathcal{A}_1), \bv\in \ve_1.
\end{equation*}
Note that
\begin{equation*}
D(\mathcal{A}_1)=\lbrace \bu\in \ve: \exists \mathbf{f}\in
\h\subset \ve^\ast \text{ for which } a(\bu,\bv)=(\mathbf{f},\bv),
\forall \bv\in \ve\rbrace,
\end{equation*}
 and $\mathcal{A}_1=\mathbf{P}\Delta^2$, where $\mathbf{P}$ is the
orthogonal projection defined on $L^2(D)$ onto $\h$. The operator $\mathcal{A}_1$ is self-adjoint and it follows from Rellich's theorem that it is compact on $\h_1$.
Therefore, there exists
a sequence of positive numbers $\{\lambda_i: i=1, 2, 3,\dots\}$ and a family of smooth function $\{\phi_i: i=1, 2, 3, \dots\}$ satisfying
\begin{equation}\label{spec-a-1}
 \mathcal{A}_1\phi_i=\lambda_i \phi_i,
\end{equation}
for any $i$. We can normalize the family $\{\phi_i: i=1, 2, 3,
\dots\}$ so that they will form an orthonormal basis of $\h_1$
which is orthogonal in $\ve_1$.

 We also
introduce a linear operator $\mathcal{A}_2$ from $\mathbb{V}_2$
taking values into $\mathbb{V}_2^\ast$ (i.e, $\mathcal{A}_2\in
\mathcal{L}(\mathbb{V}_2,\mathbb{V}_2^\ast)$ ) defined by
\begin{equation*}%\label{eq3}
 \langle \mathcal{A}_2 \bb, \mathbf{C}\rangle= S ((\bb, \mathbf{C}))_2,
\end{equation*}
 for any $\bb, \mathbf{C}\in \mathbb{V}_2$. The operator $\mathcal{A}_2$ is self-adjoint and compact on $\h_2$, so as before we can find a family of increasing positive numbers
$\{\mu_i: i=1, 2, 3, \dots\}$
  and a family of smooth functions $\{ \psi_i: i=1, 2, 3, \dots\}$ such that
\begin{equation}\label{spce-a-2}
 \mathcal{A}_2\psi_i=\mu_i \psi_i,
\end{equation}
 holds for any $i$.
It is known from \cite{TEMAM+SERMANGE} that the family $\{ \psi_i:
i=1, 2, 3, \dots\}$ verifies the following problem
\begin{equation*}
 \begin{cases}\curl \curl \psi_i =\mu_i \psi_i,\\
\Div \psi_i=0,\\
(\psi_i\cdot \bn)_{\partial \CQ}= (\curl \psi_i\cdot \bn)_{\partial \CQ}=0,
 \end{cases}
\end{equation*}
for any $i$.

Through $\mathcal{A}_1$ and $\mathcal{A}_2$ we can define a linear
operator from $\ve$ to $\ve^\ast$ by setting
\begin{equation}\label{def-A}
\langle \mathcal{A}\Phi,\Psi\rangle=\langle
\mathcal{A}_1\bu,\bv\rangle+\langle\mathcal{A}_2\bb,\mathbf{C}\rangle,
\end{equation}
for any $\Phi=(\bu; \bb), \Psi=(\mathbf{v}; \mathbf{C})\in
\mathbb{V}$.

%
% Through $\mathcal{A}_1$ and $\mathcal{A}_2$ we can define a linear
% operator from $\ve$ to $\ve^\ast$ by setting
% \begin{equation}
% \langle \mathcal{A}\Phi,\Psi\rangle=\langle
% \mathcal{A}_1\bu,\bv\rangle+\langle\mathcal{A}_2\bb,\mathbf{C}\rangle,
% \end{equation}
% for any $\Phi=(\bu; \bb), \Psi=(\mathbf{v}; \mathbf{C})\in
% \mathbb{V}$.

%For any any $\bu\in \mathbb{W}^{1,p}$ we set
%\begin{equation*}
% \mathcal{E}(u)=\frac{1}{2}\Big[(\nabla \bu)+(\nabla \bu)^T\Big].
%\end{equation*}
To take into account the $p$-structure of the fluids we introduce
a nonlinear mapping $\mathcal{A}_p$ from $\mathbb{V}$ into
$\mathbb{V}^\ast$ by setting
\begin{equation*}%\label{eq4}
 \langle \mathcal{A}_p \Phi, \Psi\rangle=\int_\mathcal{Q} \mathbf{T}(\mathcal{E}(\bu)) \cdot \mathcal{E}(\mathbf{v}) dx,
\end{equation*}
 for any $\Psi=(\bu;\bb), \Psi=(\mathbf{v};\mathbf{C}) \in \mathbb{V}$.
  We state some important properties of $\mathcal{A}_p$
in the following
\begin{lem}\label{lem1}
Let $p\in (1,\frac{2n+6}{n+2}]$ and let $\mathbf{T}$ and $\Sigma$
satisfy \eqref{tens1}-\eqref{tens3}. Then,
%Then,
\begin{enumerate}
 \item[(i)] the nonlinear operator $\mathcal{A}_p$ is monotone; that is,
\begin{equation*}%\label{3}
\langle \mathcal{A}_p\Phi_1-\mathcal{A}_p\Phi_2,
\Phi_1-\Phi_2\rangle \ge 0,
\end{equation*}
for any $\Phi_1, \Phi_2 \in \mathbb{V}$. In particular, we have
\begin{equation}\label{4}
 \langle \mathcal{A}_p \Phi, \Phi\rangle \ge 0,
\end{equation}
for any $\Phi \in \mathbb{V}$.
%\item Also,
%\begin{equation}\label{5}
% ||\mathcal{A}\Phi||^{p^\ast,2}_{\ve^\ast}\le C (1+||\Phi||^{p,2}_\ve),
%\end{equation}
%for any $\Phi \in \mathbb{V}$. Here
 % $$\frac{1}{p}+\frac{1}{p^\ast}=1 ,$$ and
% $$||\Psi||^{p^\ast,2}_\ve =||\psi||^2_{\mathbb{V}_2^\ast}+ ||\phi||^{p^\ast}_{\mathbb{V}^\ast_{1,p}},$$
%for $\Psi=(\phi; \psi)\in \mathbb{V}^\ast$.
 \item[(ii)] If $\Phi \in
L^2(0,T,\ve)\cap L^\infty(0,T,\h)$, then $\mathcal{A}_p \Phi\in
L^2(0,T,\ve^\ast)$.
\end{enumerate}
\end{lem}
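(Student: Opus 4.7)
The plan is to establish the two parts separately, with (i) being an essentially algebraic consequence of the convexity of $\Sigma$ and (ii) requiring a Sobolev interpolation argument that singles out the upper threshold on $p$. For (i), the key observation is that by (1.2), $\mathbf{T}$ is the gradient of the potential $\Sigma$, and by (1.3) the Hessian of $\Sigma$ is pointwise positive semidefinite, so $\Sigma$ is convex and $\mathbf{T}$ is monotone. Concretely, for any $\mathbf{D}_1,\mathbf{D}_2\in\mathbb{R}^{n\times n}_{sym}$, the fundamental theorem of calculus gives
\[
(\mathbf{T}(\mathbf{D}_1)-\mathbf{T}(\mathbf{D}_2)):(\mathbf{D}_1-\mathbf{D}_2) = \int_0^1 \partial_{ij}\partial_{kl}\Sigma\bigl(\mathbf{D}_2+s(\mathbf{D}_1-\mathbf{D}_2)\bigr)(\mathbf{D}_1-\mathbf{D}_2)_{ij}(\mathbf{D}_1-\mathbf{D}_2)_{kl}\,ds,
\]
which is nonnegative by (1.3) applied with $\mathbf{E}=\mathbf{D}_1-\mathbf{D}_2$. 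Integrating over $x\in\mathcal{Q}$ with $\mathbf{D}_i=\mathcal{E}(\bu_i)$ gives the monotonicity inequality, and specializing to $\Phi_2=0$ together with $\mathbf{T}(0)=0$ (immediate from $\partial_{kl}\Sigma(0)=0$) produces (1.5).

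For (ii), I would first derive the pointwise growth estimate $|\mathbf{T}(\mathbf{D})|\le C(1+|\mathbf{D}|)^{p-1}$ by integrating the Hessian bound (1.4) along the segment from $0$ to $\mathbf{D}$ and using $\mathbf{T}(0)=0$; this yields $|\mathbf{T}(\mathbf{D})|^2 \le C'(1+|\mathbf{D}|^{2(p-1)})$. By duality, Cauchy--Schwarz in $L^2(\mathcal{Q})$, and the continuous embedding $\mathbb{V}_1\hookrightarrow H^2(\mathcal{Q})$ recorded in (2.1),
\[
\|\mathcal{A}_p\Phi(t)\|_{\ve^\ast}^2 \le C\,\|\mathbf{T}(\mathcal{E}(\bu(t)))\|_{L^2(\mathcal{Q})}^2 \le C''\Bigl(1 + \|\mathcal{E}(\bu(t))\|_{L^{2(p-1)}(\mathcal{Q})}^{2(p-1)}\Bigr),
\]
so the claim reduces to proving the space--time integrability $\mathcal{E}(\bu)\in L^{2(p-1)}\bigl((0,T)\times\mathcal{Q}\bigr)$.

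The heart of the proof is a Gagliardo--Nirenberg interpolation. In the easy regime $1<p\le 2$ the inclusion $L^2(\mathcal{Q})\subset L^{2(p-1)}(\mathcal{Q})$ together with $\bu\in L^2(0,T;\mathbb{V}_1)$ finishes the argument, so only the case $2<p\le\frac{2n+6}{n+2}$ is genuinely at stake. There I would apply, pointwise in $t$, the inequality
\[
\|\mathcal{E}(\bu(t))\|_{L^{2(p-1)}(\mathcal{Q})}\le C\,\|\bu(t)\|_{H^2(\mathcal{Q})}^{\theta}\,\|\bu(t)\|_{L^2(\mathcal{Q})}^{1-\theta}, \qquad \theta=\frac{n(p-2)+2(p-1)}{4(p-1)},
\]
raise to the power $2(p-1)$, and integrate in $t$. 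The hypothesis $\bu\in L^\infty(0,T;\h)$ pulls out the factor $\|\bu\|_{L^2}^{2(p-1)(1-\theta)}$, while the remaining exponent $2(p-1)\theta=\frac{n(p-2)+2(p-1)}{2}$ on $\|\bu\|_{H^2}$ equals $2$ exactly at $p=\frac{2n+6}{n+2}$ and is strictly below $2$ otherwise, so $\bu\in L^2(0,T;H^2)$ (via $\bu\in L^2(0,T;\mathbb{V}_1)$) closes the estimate after a H\"older step in time. The principal obstacle lies precisely in this balance: the admissibility $\theta\in[0,1]$ and the condition $2(p-1)\theta\le 2$ need to hold simultaneously, and they are compatible exactly up to the threshold $\frac{2n+6}{n+2}$ imposed on $p$ in the hypothesis.
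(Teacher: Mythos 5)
Your proposal is correct and follows essentially the same route as the paper: duality plus Cauchy--Schwarz reduces (ii) to a space--time $L^{2(p-1)}$ bound on $\mathcal{E}(\bu)$ via the growth estimate $|\mathbf{T}(\mathbf{D})|^2\le C(1+|\mathbf{D}|)^{2p-2}$, the case $1<p\le2$ is handled by the trivial embedding on a bounded domain, and the case $2<p\le\frac{2n+6}{n+2}$ by exactly the same Gagliardo--Nirenberg interpolation between $L^\infty(0,T;\h)$ and $L^2(0,T;\ve)$, with your exponent $2(p-1)\theta$ coinciding with the paper's $qa=\frac{q(2+n)-2n}{4}$. The only cosmetic difference is that for (i) you derive the pointwise monotonicity of $\mathbf{T}$ directly from the Hessian condition, whereas the paper cites it from the literature; your endpoint bookkeeping ($2(p-1)\theta=2$ at $p=\frac{2n+6}{n+2}$) is in fact slightly more careful than the paper's stated strict inequality.
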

To check the results in the above lemma we need to recall the following results whose proofs can be found in
\cite{RUZICKA}.
\begin{lem}[\textbf{Korn's inequalities}]
 Let $1<p<\infty$ and let $\mathcal{Q}\subset \mathbb{R}^n$ be of class $C^1$. Then there exist two positive constants $K^i_p=K^i_p(\mathcal{Q}), i=1,2$ such that
\begin{equation}
 K^1_p ||\bu||_{1,p}\le \left(\int_\mathcal{Q} |\mathcal{E}(\bu)|^p dx\right)^\frac{1}{p}\le K^2_p ||\bu||_{1,p},
\end{equation}
for any $\bu\in \ve_{1,p}$.
\end{lem}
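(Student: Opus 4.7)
The upper bound is elementary. Since $\mathcal{E}_{ij}(\bu)=\tfrac{1}{2}(\partial_j u_i+\partial_i u_j)$, one has the pointwise estimate $|\mathcal{E}(\bu)|\le c\,|\nabla \bu|$, and integrating gives
\[
\left(\int_{\mathcal{Q}}|\mathcal{E}(\bu)|^p\,dx\right)^{1/p}\le K_p^2\,||\bu||_{1,p}
\]
with $K_p^2$ depending only on the equivalence of matrix norms on $\mathbb{R}^{n\times n}$. This disposes of one direction without invoking the boundary condition.

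The substantive content is the reverse inequality. I would proceed via the classical Ne\v{c}as--Lions route. The starting point is the algebraic identity
\[
\partial_j\partial_k u_i \;=\; \partial_k \mathcal{E}_{ij}(\bu)+\partial_j \mathcal{E}_{ik}(\bu)-\partial_i \mathcal{E}_{jk}(\bu),
\]
valid componentwise for smooth $\bu$. It expresses every second derivative of $\bu$ as a distributional first derivative of $\mathcal{E}(\bu)$, so $\nabla(\partial_k u_i)$ lies in $\mathbb{W}^{-1,p}(\mathcal{Q})$ with norm controlled by $c\,||\mathcal{E}(\bu)||_{0,p}$. Combined with the trivial embedding $\mathbb{L}^p\hookrightarrow \mathbb{W}^{-1,p}$ applied to $\partial_k u_i$ itself, Ne\v{c}as' negative-norm theorem --- which promotes a distribution whose gradient belongs to $\mathbb{W}^{-1,p}$ to an $\mathbb{L}^p$ function --- yields the \emph{local} Korn-type inequality
\[
||\bu||_{1,p}\le c\bigl(||\bu||_{0,p}+||\mathcal{E}(\bu)||_{0,p}\bigr).
\]

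To absorb the lower-order term on $\ve_{1,p}$, whose elements vanish on $\partial\mathcal{Q}$ (inherited from $\mathcal{V}_1\subset \mathbb{C}_0^\infty$), I would run a Lions-type compactness-contradiction argument. If the conclusion failed, one could extract $\bu_n\in \ve_{1,p}$ with $||\bu_n||_{1,p}=1$ and $||\mathcal{E}(\bu_n)||_{0,p}\to 0$. Rellich--Kondrachov provides a subsequence converging in $\mathbb{L}^p$, and the local Korn inequality upgrades this to a Cauchy sequence in $\mathbb{W}^{1,p}$; the limit $\bu$ satisfies $\mathcal{E}(\bu)=0$ and is therefore an infinitesimal rigid motion $\bu(x)=a+Bx$ with $B$ skew. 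The vanishing Dirichlet trace on the smooth connected boundary $\partial\mathcal{Q}$ forces $a=0$ and $B=0$, contradicting $||\bu||_{1,p}=1$.

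The main obstacle is the Ne\v{c}as negative-norm theorem itself, which is not elementary: its proof rests on Calder\'on--Zygmund singular-integral estimates, and this is precisely why the hypothesis $1<p<\infty$ appears, the endpoints $p=1,\infty$ being excluded by the failure of those bounds. A secondary subtlety lies in the compactness step, where one must verify that the kernel of $\mathcal{E}$ intersected with $\ve_{1,p}$ is trivial; this uses the $C^1$ regularity of $\partial\mathcal{Q}$ to make sense of the trace and the fact that a nonzero affine rigid motion cannot vanish on a full smooth boundary.
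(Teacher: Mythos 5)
The paper does not actually prove this lemma: it is quoted verbatim with a pointer to \cite{RUZICKA}, so there is no internal argument to compare yours against. Judged on its own terms, your sketch is the standard and correct route. The upper bound is indeed trivial from $|\mathcal{E}(\bu)|\le c|\nabla\bu|$. The algebraic identity $\partial_j\partial_k u_i=\partial_k\mathcal{E}_{ij}(\bu)+\partial_j\mathcal{E}_{ik}(\bu)-\partial_i\mathcal{E}_{jk}(\bu)$ checks out, and combining it with the Ne\v{c}as negative-norm theorem (which is where the restriction $1<p<\infty$ and the boundary regularity genuinely enter, via Calder\'on--Zygmund theory) correctly yields the second Korn inequality $||\bu||_{1,p}\le c\bigl(||\bu||_{0,p}+||\mathcal{E}(\bu)||_{0,p}\bigr)$. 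The compactness-contradiction step to absorb $||\bu||_{0,p}$ is also sound: Rellich--Kondrachov applies because $\mathcal{Q}$ is bounded (a standing assumption of the paper), the limit is an infinitesimal rigid motion, and a nonzero affine map cannot have zero trace on the whole boundary of a bounded domain; one should also note, as you implicitly do, that the space $\ve_{1,p}$ (left undefined in the paper, but evidently the solenoidal $\mathbb{W}^{1,p}$ fields with vanishing trace) is closed in $\mathbb{W}^{1,p}(\mathcal{Q})$, so the limit stays in it. Two cosmetic remarks: for zero-trace fields one could alternatively extend by zero and use a convolution/singular-integral representation of $\nabla\bu$ in terms of $\mathcal{E}(\bu)$ directly, which avoids the compactness step and gives a constant independent of the shape of $\mathcal{Q}$; and at $p=2$ the whole lower bound reduces to the elementary integration-by-parts identity, which is worth recording since the paper only ever applies the lemma with exponent $2$ in the estimates for $\mathcal{A}_p$.
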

 \begin{proof}[Proof of Lemma \ref{lem1}]
%Let $$\mathbb{R}^{n\times n}_{sym}=\{\mathbf{D}\in
%\mathbb{R}^{n\times n}: D_{ij}=D_{ji}, i,i=1,2,\dots, n\}.$$
It is known from \cite{MALEKetal} that for any $p\in(1,\infty)$
there exists a positive constant $\nu_3$ depending only on the
physical data such that for all $\mathbf{D}, \mathbf{E}\in
\mathbb{R}^{n\times n}_{sym}$:
\begin{align}
\left(\mathbf{T}(\mathbf{D})-\mathbf{T}(\mathbf{E})\right)\cdot
\left(\mathbf{D}-\mathbf{E}\right)\ge 0 \label{l4},
\end{align}
and
\begin{equation}
 |\mathbf{T}(\mathbf{D})|^2\le \nu_3 (1+|\mathbf{D}|)^{2p-2}.\label{eq13}
\end{equation}

 Therefore, we see from \eqref{l4} that for any $\bu,\mathbf{v}\in \ve_{1}$
\begin{align}
 \langle\mathcal{A}_p \bu-\mathcal{A}_p \mathbf{v}, \bu-\mathbf{v}\rangle=&\int_\mathcal{Q}\left[\mathbf{T}(\mathcal{E}(\bu))-\mathbf{T}(\mathcal{E}(\mathbf{v}))\right]\cdot \left[\mathcal{E}(\bu)-\mathcal{E}
(\mathbf{v})\right]dx\ge 0,\nonumber
\end{align}
which proves the monotonicity of $\mathcal{A}_p$.
% It is easily seen that
% \begin{equation}
%  \langle \mathcal{A}_2\mathbf{B}-\mathcal{A}_2\mathbf{C}, \bb-\mathbf{C}\rangle \ge \frac{S}{2} ||\mathbf{B}-\mathbf{C}||^2_2, \label{l6}
% \end{equation}
% for any $\mathbf{B}, \mathbf{C}\in \ve_2$. Thus, it follows from
% \eqref{l5}-\eqref{l6} that $\mathcal{A}_p$ is monotone.

By noticing that $\mathbf{T}(0)=0$ the estimate \eqref{4} is a
simple consequence of the last inequality.

For any $\Psi=(\bv;\mathbf{C}), \Phi=(\bu;\bb) \in \ve$ we have
\begin{equation*}
 \langle \mathcal{A}_p\Phi,\Psi\rangle=\int_\mathcal{Q}\mathbf{T}(\mathcal{E}(\bu))\cdot \mathcal{E}(\bv) dx,
\end{equation*}
which leads to
\begin{equation*}
 |\langle \mathcal{A}_p\Phi,\Psi\rangle| \le\left(\int_\mathcal{Q}|\mathbf{T}(\mathcal{E}(\bu))|^2dx \right)^\frac{1}{2}
\left(\int_\mathcal{Q}|\mathcal{E}(\bv)|^2 dx \right)^\frac{1}{2}.
\end{equation*}
This last estimate along with the discrete H\"older's inequality
and Korn's inequality imply that there exists  a constant $C>0$
such that
\begin{equation}\label{ap-est1}
 |\langle \mathcal{A}_p\Phi,\Psi\rangle| \le C \left[\int_\mathcal{Q}|\mathbf{T}(\mathcal{E}(\bu))|^2 dx \right]^\frac{1}{2}
\left[||\mathbf{C}||^2_1+ |\nabla \bv|^2\right]^\frac{1}{2}.
\end{equation}
As $\ve_1$ is continuously embedded in $\h^1_0(\mathcal{Q})$, we see from \eqref{ap-est1} that
 \begin{equation*}%
 |\langle \mathcal{A}_p\Phi,\Psi\rangle| \le C \left[\int_\mathcal{Q}|\mathbf{T}(\mathcal{E}(\bu))|^2 dx \right]^\frac{1}{2}
||\Psi||.
\end{equation*}
Hence
\begin{equation}\label{ap-est2}
 ||\mathcal{A}_p\Phi||^2_{\ve^\ast}\le C\int_\mathcal{Q}|\mathbf{T}(\mathcal{E}(\bu))|^2 dx.
\end{equation}
Let
\begin{equation*}%\label{ap-est2-A}
 \chi(\bu)=\int_\mathcal{Q}|\mathbf{T}(\mathcal{E}(\bu))|^2 dx.
\end{equation*}
To deal with $\chi(\bu)$ we will distinguish two cases

\noindent\underline{\textsc{Case 1: $1<p \le 2$}}

\noindent We can deduce from \eqref{eq13} that there exists a
positive constant $C$ such that
\begin{equation}
 |\mathbf{T}(\mathbf{D})|^2\le C
 (1+|\mathbf{D}|^2)(1+|\mathbf{D}|^2)^{p-2},\label{12}
\end{equation}
from which along with $p\in(1,2]$ we deduce that
\begin{equation*}
 \chi(\bu)\le C+C \int_\mathcal{Q} |\mathcal{E}(\bu)|^2 dx.
\end{equation*}
Owing to Korn's inequality and the continuous embedding of $\ve_1$ into $\h^1_0(\mathcal{Q})$ we infer that
\begin{equation*}
 \chi(\bu)\le C ||\bu||^2_1.
\end{equation*}
Thus, we derive from the last estimate and \eqref{ap-est2} the existence of a positive constant $C$  such that
\begin{equation*}%\label{ap-est3}
 ||\mathcal{A}_p\Phi||^2_{\ve^\ast}\le C+ C ||\Phi||^2,
\end{equation*}
which implies that $\mathcal{A}_p\Phi\in L^2(0,T;\ve^\ast)$ if
$\Phi\in L^2(0,T;\ve)$. The proof of item (ii) is finished for the
case $1<p\le 2$.

\noindent\underline{\textsc{Case 1: $2<p \le \frac{2n+6}{n+2}$}}

\noindent Throughout this step we set $q=2p-2$. For $\bu\in \ve_1$
we infer from Gagliardo-Nirenberg's inequality that there exists a
constant $C>0$ such that
\begin{equation}\label{ap-est4}
 |\nabla \bu|_{r}=\left(\int_\mathcal{Q} |\nabla \bu|^r dx\right)^\frac{1}{r}\le C ||\bu||_1^a |\bu|^{1-a},
\end{equation}
with $\frac{1}{2}\le a\le 1$ and
\begin{equation*}
 r\in \begin{cases}
   [2,\infty) \text{ if } n=2,\\
[2,s) \text{ if } n=3,
  \end{cases}
\end{equation*}
where
\begin{equation*}
 \frac{1}{s}=\frac{1}{n}+\frac{1}{2}-\frac{2a}{n}.
\end{equation*}
Let $q=r\in [2,\infty)$ such that
$\frac{1}{q}=\frac{1}{n}+\frac{1}{2}-\frac{2a}{n}$ with
$\frac{1}{2}\le a \le 1$. This is equivalent to saying that
$$ qa=\frac{q(2+n)-2n}{4}.$$
% and
The estimate \eqref{ap-est4} implies that
\begin{equation*}
 \int_0^T|\nabla \bu|^q_q dt\le C\sup_{s\in [0,T]}|\bu(s)|^{(1-a)q}\int_0^T ||\bu||_1^{qa} dt.
\end{equation*}
Since $p\in [2,\frac{2n+6}{n+2}]$, then it is not difficult to check that $qa<2$ which enables us to apply H\"older's inequality and infer that
\begin{equation}\label{ap-est5}
 \int_0^T|\nabla \bu|^q_q dt\le  C\sup_{s\in [0,T]}|\bu(s)|^{(1-a)q}\left(\int_0^T ||\bu||_1^2 dt\right)^\frac{qa}{2}.
\end{equation}
% It follows from the last estimate and Young's inequality that
% \begin{equation*}
%  \int_0^T|\nabla \bu|^2_q dt\le C\sup_{s\in [0,T]} |\bu(s)|^{(1-a)\beta q}+ C\int_0^T ||\bu(s)||^2_1 ds,
% \end{equation*}
% where $\beta=\frac{2}{2-qa}.$
% From the Young's inequality again we see that
% \begin{equation}\label{ap-est5}
%  \int_0^T|\nabla \bu|^2_q dt\le \max\left(0,\frac{2-(1-a)\beta q}{2}\right)+C\sup_{s\in [0,T]} |\bu(s)|^2+ C\int_0^T ||\bu(s)||^2_1 ds.
% \end{equation}
By using Korn's inequality and the embedding of $\ve_1$ into
$\h^1_0(\mathcal{Q})$ we see from  \eqref{ap-est2} and
\eqref{eq13} that
\begin{equation}\label{ap-est6}
 \int_0^T \chi(\bu(s))ds\le C + C\int_0^T |\nabla\bu(s)|^q_q ds.
\end{equation}
Therefore by plugging \eqref{ap-est5} into the last estimate we get that
\begin{equation}\label{ap-est7}
 \int_0^T \chi(\bu(s))ds\le C +C\sup_{s\in [0,T]}|\bu(s)|^{(1-a)q}\left(\int_0^T ||\bu||_1^2 dt\right)^\frac{qa}{2},
\end{equation}
Noticing that $$|\bu(s)|^{(1-a)q}\le [|\bu(s)|^2+|\bb(s)|^2]^{\frac{(1-a)q}{2}},$$
and $$ \left(\int_0^T ||\bu||_1^2 dt\right)^\frac{qa}{2}\le C \left(\int_0^T \left[2\kappa_1 ||\bu(s)||_1^2 +S||\bb(s)||_2^2\right]dt\right)^\frac{qa}{2},$$
we see that
\begin{equation}
 \int_0^T \chi(\bu(s))ds\le C +C\sup_{s\in [0,T]}|\Phi(s)|^{(1-a)q}\left(\int_0^T ||\Phi(s)||^2 dt\right)^\frac{qa}{2},
\end{equation}
from which it follows that if $\Phi\in L^\infty(0,T;\h)\cap
L^2(0,T;\ve)$ then $\mathcal{A}_p\in L^2(0,T; \ve)$. Hence the
claim (ii) of the lemma holds true.

% We have that
% \begin{equation}\label{l8}
%  ||\mathcal{A}_2 \bb||^2_{\ve_2^\ast}\le \frac{S^2}{4} ||\bb||^2_2
% \end{equation}
% which implies that $\mathcal{A}_2\bb\in L^2(0,T,\ve_2^\ast)$ if
% $\Phi=(\bu,\bb) \in L^2(0,T,\ve)\cap L^\infty(0,T,\h)$.
% Now let
% \begin{equation}
%
% \end{equation}
%
% So to
% prove the item (3) it is enough to check that that $\mathcal{A}_p
% \bu\in L^2(0,T,\ve^\ast)$, and this was done in \cite{GUO1}. This
% ends the proof of our lemma.
 \end{proof}
Before we proceed further, let us state the following
\begin{Rem}\label{rem-1}
 We could see from the course of the proof of Lemma \ref{lem2} that  there exist positive constants 
 $\tilde{C}_1$ and $\tilde{C}_2$ depending only on $p$, $S$ and $\mathcal{Q}$ such that for any function
$\Phi\in L^\infty(t,t+1;\h)\cap L^2(t,t+1;\ve)$ and $t\ge 0$
\begin{equation}\label{est-rem}
\int_t^{t+1} ||\mathcal{A}_p\Phi||_2 \le
  \tilde{C}_1 \int_t^{t+1}||\Phi(s) ||^2+\tilde{C}_2,\,\,\, \text{ if } p\in (1,2],\\
\end{equation}
and
\begin{equation}\label{est-rem2}
 \int_t^{t+1} ||\mathcal{A}_p\Phi||_2 \le \tilde{C}_1\sup_{\substack{\\s\in [0,T]}}|\Phi(s)|^{(1-a)q}\left(\int_0^T
||\Phi(s)||^2 dt\right)^\frac{qa}{2}+\tilde{C}_2,\text{ if
} p\in (2,\frac{2n+6}{n+2}],
\end{equation}
 where $q=2p-2$ and
 $qa=\frac{q(2+n)-2n}{4}$.
\end{Rem}
%The nonlinear term $A_p:V\rightarrow V^\ast$ is defined as
%follows:
%\begin{equation*}
%(A_p u, v)=\int_D\Gamma(u)\mathcal{E}_{ij}(u)\mathcal{E}_{ij}(v)
%dx, u,v\in V,
%\end{equation*}
%where
%$\mathbf{T}(\bu)=(\varepsilon+|\mathcal{E}(\bu)|^2)^{\frac{p-2}{2}}$.
%Some of the properties of $A_p$ is given below.

%\begin{lem}
%\begin{enumerate}
%\item

%\item[(i)] $A_p$ is locally Lipschitz continuous. Namely,
%\begin{equation}  \label{3}
%||A_p u-A_p v||_{V^\ast}\le C(||u||_2, ||v||_2, p,\varepsilon)
%||u-v||_2, \,\, u,v\in V.
%\end{equation}

%\item[(ii)] If $u\in L^2(0,T,\ve_1)\cap L^\infty(0,T,\h_1)$, then
%$\mathcal{A}_p \bu\in L^2(0,T,V^\ast)$.

%\item[(iii)] There exists a positive constant $K_0$ such that
%\begin{equation}  \label{AP3}
%(\mathcal{A}_p \bu-\mathcal{A}_p\bv,\bu-\bv)\ge
%\varepsilon^{\frac{p-2}{2}}||\bu-\bv||_1,
%\end{equation}
%for any $\bu,\bv\in \ve_1$.
%\end{enumerate}
%\end{lem}

%The point (i) and (ii) were proved in \cite{GUO1}, and (iii) in
%\cite{DU} for example.  From now on, we will work with

%%%%%%%%%%%%%%%%%%%%%%%%%%%%%%%%%%%%%%%%%%%%%%%%%%%%%%%%%%%%%%%%%%%%%%%%%%%%%%%%%%%%%%%%%
As in the case of Navier-Stokes equations we introduce the
well-known trilinear form $b(\bu,\bv,\bw)$ to deal with the other
nonlinear terms of \eqref{*}. For any $\bu, \mathbf{v},
\mathbf{w}\in \mathcal{C}^{\infty}_0(\CQ),$ we set
\begin{equation*}
 b(\bu,\mathbf{v},\mathbf{w})=\int_\mathcal{Q} \bu_i\frac{\partial \bv_j}{\partial x_i} \bw_j
 dx.
\end{equation*}
where summations over repeated indices are enforced. It is also
possible to extend the definition  of the trilinear $b(., ., .)$
to larger spaces by exploiting the density of
$\mathcal{C}^{\infty}_0(\CQ)$( or $\mathcal{V}_1$ and
$\mathcal{V}_2$) in appropriate space. We will do it very often
especially each time the trilinear form $b(.,.,.)$ is continuous.
For instance, It is well-known (see, among others, \cite{Temam})
that the trilinear form $b(\bu,\mathbf{v},\mathbf{w})$ is
continuous on $\mathbb{H}^1(\mathcal{Q})\times
\mathbb{H}^1(\mathcal{Q})\times \mathbb{H}^1(\mathcal{Q})$.
Moreover,
\begin{align}
 b(\bu,\mathbf{v},\mathbf{v})&=0,\label{7}\\
b(\bu,\mathbf{v},\mathbf{w})&=-b(\bu,\mathbf{w},\mathbf{v}),
\label{8}
\end{align}
for any $\bu \in \mathbb{V}_2$, $\mathbf{v},\, \mathbf{w}\in
\mathbb{H}^1(\mathcal{Q})$. Since $\mathbb{V}_1\subset
\mathbb{V}_2$, then \eqref{7} and \eqref{8} are also valid for any
element $\bu$ in $\mathbb{V}_1$.
% For the proofs of the above
% properties and more information on the form $b(.,.,.)$, we refer,
% for instance, to {\color{red}\cite{Temam}}.

It is also easy to check that
\begin{align}
\left|b(\bu,\bv,\bw)\right|\le&
                   C |\bu||\nabla
\bv| |\bw|_{\mathbb{L}^\infty(\CQ)},\label{ext-trib-A}
\end{align}
\text{ for } $\bu\in \mathbb{L}^2(\CQ), \bv\in \h^1_0(\CQ), \bw
\in \mathcal{C}^{\infty}_0(\CQ),$ and
\begin{align}
\left|b(\bu,\bv,\bw)\right|\le C&  |\bu|_{\mathbb{L}^\infty(\CQ)}
|\bv| |\nabla \bw|, \label{ext-trib-B}
\end{align}
 \text{ for } $\bu\in \mathcal{C}^{\infty}_0(\CQ), \bv \in
\mathbb{L}^2(\CQ), \bw\in \h^1_0(\CQ)$.
%which with Poincar\'e's inequality and Sobolev embedding we derive
%that
%As proved in \cite{LIONS}, the trilinear form
%$b(\bu,\mathbf{v},\mathbf{w})$ is also continuous in
%$\mathbb{V}_{1,p}\times \mathbb{V}_{1,p}\times \mathbb{V}_{1,p}$
%as long as
%\begin{equation}\label{6}
% p\ge 1+\frac{2n}{n+2}.
%\end{equation}
%Hereafter we always assume that \eqref{6} holds.

Now we introduce a trilinear form defined on $\ve\times\ve\times \ve$ by setting
\begin{equation}\label{9}
 \bo(\Phi_1, \Phi_2, \Phi_3)=b(\bu_1, \bu_2, \bu_3)-\mu b(\bb_1, \bb_2, \bu_3)+\mu b(\bu_1, \bb_2, \bb_3)-\mu b(\bb_1, \bu_2, \bb_3),
\end{equation}
for any $\Phi_i=(\bu_i; \bb_i)\in \ve, i=1,2,3.$

\noindent We collect some properties of $\bo$ in the following lemma.
\begin{lem}\label{lem2}
 \begin{enumerate}
  \item[(i)] For any $\Phi_1, \Phi_2\in \ve$, there exists a bilinear form $\mb(\Phi_1, \Phi_2)$ taking values in $\ve^\ast$ such that
\begin{equation}\label{ex-bilB}
 \bo(\Phi_1, \Phi_2, \Phi_3)=\langle \mb(\Phi_1, \Phi_2), \Phi_3\rangle, \text{ for } \Phi_3 \in \ve.
\end{equation}
\item[(ii)] For any $\Phi_i \in \ve, i=1,2,3$ we have
\begin{align}
 \bo(\Phi_1,\Phi_2, \Phi_3)&=0,\label{10}\\
\bo(\Phi_1, \Phi_2, \Phi_3)&=-\bo(\Phi_1, \Phi_3.
\Phi_2)\label{11}
\end{align}
Furthermore, there exists a constant $C$ depending only on the data of \eqref{*} such that
\begin{equation}\label{ext-trib6}
 \begin{split}
  |\langle\mb(\Phi_1,\Phi_2), \Phi_3\rangle|\le C\max(1,\mu) ||\phi_3||\Big[2 |\Phi_1|\,\, ||\Phi_2||+ ||\Phi_1|||\Phi_2|\Big],
 \end{split}
\end{equation}
for any $\Phi_i \in \ve, i=1,2,3$.
\item[(iii)] If
$\Phi=(\bu;\bb)\in L^2(0,T; \ve)\cap L^\infty(0,T;\h)$ then
$\mb(\Phi,\Phi) \in L^2(0,T; \ve^\ast)$.
 \end{enumerate}
\end{lem}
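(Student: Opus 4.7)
My plan is to reduce everything to the four scalar trilinear forms on the right-hand side of \eqref{9} and exploit two main tools: the standard cancellation identities \eqref{7}--\eqref{8} for the Navier--Stokes trilinear form, and the continuous embeddings $\ve_1 \hookrightarrow \h^2(\mathcal{Q}) \hookrightarrow \mathbb{L}^\infty(\mathcal{Q})$ (valid for $n\le 3$) together with $\ve_2\hookrightarrow \h^1(\mathcal{Q})$. The estimate \eqref{ext-trib6} is the engine that drives parts (i) and (iii), so I would prove it first.

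For the estimate I would bound each of the four summands in \eqref{9} individually. The term $b(\bu_1,\bu_2,\bu_3)$ is bounded directly by $|\bu_1|\,|\nabla\bu_2|\,|\bu_3|_{\mathbb{L}^\infty(\mathcal{Q})}$, then collapsed to $|\Phi_1|\cdot\|\Phi_2\|\cdot\|\Phi_3\|$ via \eqref{eq-bloom} and the $\h^2\hookrightarrow \mathbb{L}^\infty$ embedding. The term $b(\bb_1,\bb_2,\bu_3)$ is treated the same way, again placing $\bu_3$ in the $\mathbb{L}^\infty$ slot and using $|\nabla \bb_2|\le C\|\bb_2\|_2$. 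The two remaining terms, $b(\bu_1,\bb_2,\bb_3)$ and $b(\bb_1,\bu_2,\bb_3)$, are the delicate ones because no $\bb_i$ embeds into $\mathbb{L}^\infty$; here I first apply the antisymmetry \eqref{8} (using that $\bu_1$, respectively $\bb_1$, is divergence free with vanishing normal trace) to swap the last two arguments, placing a $\bu$ in the $\mathbb{L}^\infty$ slot and a $\bb$ in the gradient slot. After these rearrangements one obtains bounds of the form $\|\Phi_1\|\cdot|\Phi_2|\cdot\|\Phi_3\|$ and $|\Phi_1|\cdot\|\Phi_2\|\cdot\|\Phi_3\|$ respectively, and summing the four pieces gives \eqref{ext-trib6} (possibly with a slightly larger numerical constant, which is absorbed into $C$).

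With the estimate in hand, part (i) is immediate: $\bo(\Phi_1,\Phi_2,\cdot)$ is a bounded linear functional on $\ve$, so the Riesz-type identification defines $\mb(\Phi_1,\Phi_2)\in\ve^\ast$ via \eqref{ex-bilB}. The two algebraic identities in (ii) follow by purely formal computation using \eqref{7}--\eqref{8}: for \eqref{10} (which I read as $\bo(\Phi_1,\Phi_2,\Phi_2)=0$), substituting $\Phi_3=\Phi_2$ makes $b(\bu_1,\bu_2,\bu_2)=0$ and $b(\bu_1,\bb_2,\bb_2)=0$ vanish outright, while the two cross terms $-\mu b(\bb_1,\bb_2,\bu_2)$ and $-\mu b(\bb_1,\bu_2,\bb_2)$ cancel against each other by \eqref{8}; a symmetric computation yields the antisymmetry \eqref{11}.

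Finally, part (iii) is a direct consequence of \eqref{ext-trib6} specialized to $\Phi_1=\Phi_2=\Phi$: one obtains
\begin{equation*}
\|\mb(\Phi,\Phi)\|_{\ve^\ast}\le C\max(1,\mu)\,|\Phi|\,\|\Phi\|,
\end{equation*}
so
\begin{equation*}
\int_0^T \|\mb(\Phi(s),\Phi(s))\|_{\ve^\ast}^2\,ds \le C\,\|\Phi\|_{L^\infty(0,T;\h)}^2 \int_0^T \|\Phi(s)\|^2\,ds<\infty,
\end{equation*}
which is the claim. I expect the main technical obstacle to lie in step two, namely choosing the correct antisymmetrization in the mixed terms so that the gradient always falls on a $\bb$-component while the $\mathbb{L}^\infty$-slot is filled by a $\bu$-component; this is the only place where the dimensional restriction $n\le 3$ really enters, through $\h^2\hookrightarrow \mathbb{L}^\infty$.
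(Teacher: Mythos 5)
Your proposal is correct and follows essentially the same route as the paper: the estimate \eqref{ext-trib6} is obtained term by term from \eqref{ext-trib-A}--\eqref{ext-trib-B} (equivalently, antisymmetrization via \eqref{8} in the mixed terms) together with the embedding $\h^2(\mathcal{Q})\hookrightarrow\mathbb{L}^\infty(\mathcal{Q})$ and \eqref{eq-bloom}; the algebraic identities follow from \eqref{7}--\eqref{8} exactly as you compute; and item (iii) follows by integrating $\|\mb(\Phi,\Phi)\|_{\ve^\ast}\le C\max(1,\mu)\,|\Phi|\,\|\Phi\|$ in time. The only, immaterial, difference is organizational: the paper first gets item (i) from the cruder continuity of $b$ on $\h^1(\mathcal{Q})\times\h^1(\mathcal{Q})\times\h^1(\mathcal{Q})$ and proves the refined estimate afterwards for item (iii), whereas you establish \eqref{ext-trib6} first and read off (i) from it.
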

\begin{proof}
We split the proof into three parts.

\noindent \underline{\textit{Proof of item (i)}:}

\noindent  It is well known that there exists $C>0$ such that
\begin{equation*}
 |b(\bu,\bv,\bw)|\le C ||\bu||_{1,2} ||\bw||_{1,2}||\bw||_{1,2},
\end{equation*}
holds for any $\bu, \bv, \bw\in \h^1(\CQ)$.  Hence by using a
discrete version of H\"older's inequality we have 
\begin{equation}\label{bo-cont}
 |\bo(\Phi_1, \Phi_2, \Phi_3)|\le C\max(1,\mu)||\Phi_1||_{1,2}||\Phi_2||_{1,2}||\Phi_3||_{1,2},
\end{equation}
for any $\Phi_i=(\bu_i; \bb_i)\in \h^1(\CQ)$. We infer from
\eqref{bo-cont} that $\bo$ is continuous on $\ve\times \ve\times
\ve$. Thus, for any given $\Phi_1, \Phi_2\in \ve\times \ve$ there
exists an element $\mb(\Phi_1, \Phi_2)$ of $\ve^\ast$ such that
\eqref{ex-bilB} holds for any $\Phi_3\in \ve^\ast$.

\noindent \underline{\textit{Proof of item (ii)}:}

\noindent We easily derive from \eqref{7} and \eqref{8} that
\begin{align*}
 \bo(\Phi_1,\Phi_2, \Phi_3)&=0,\\
\bo(\Phi_1, \Phi_2, \Phi_3)&=-\bo(\Phi_1, \Phi_3,
\Phi_2),
\end{align*}
for any $\Phi_i \in \ve, i=1,2,3.$

\noindent \underline{\textit{Proof of item (iii)}:}

\noindent From \eqref{ext-trib-A}-\eqref{ext-trib-B} and the
embedding $\h^2(\CQ)\subset L^{\infty}(\CQ)$ we deduce that
\begin{equation*}%\label{ext-trib2}
\begin{split}
 |\bo(\Phi_1, \Phi_2, \Phi_3)|\le C \max(1,\mu)\Big[&||\bu_3||_{2,2}\left( ||\bu_2||_1 |\bu_1|+||\bb_2||_2|\bb_1|\right)
\\ &+||\bb_3||_2 \left(||\bu_1||_{2,2}|\bb_2|+||\bu_2||_{2,2}|\bb_1|\right)\Big].
\end{split}
\end{equation*}
By \eqref{eq-bloom} and discrete H\"older's inequality we derive from the last estimate that
\begin{equation*}%\label{ext-trib3}
 \begin{split}
  |\langle\mb(\Phi_1,\Phi_2), \Phi_3\rangle|\le C\max(1,\mu) ||\phi_3||\Big[|\Phi_1|\,\, ||\Phi_2||+ ||\bu_1||_{2,2}|\bb_2|+||\bu_2||_{2,2}|\bb_1| \Big].
 \end{split}
\end{equation*}
Note that $$ ||\bu_1||_{2,2}|\bb_2|+||\bu_2||_{2,2}|\bb_1| \le ||\bu||_1|\bb_2|+||\bb_1||_2|\bu_2|+||\bu||_1|\bb_1|+||\bb_2||_2|\bu_1|,$$
which with the discrete H\"older inequality yield
$$||\bu_1||_{2,2}|\bb_2|+||\bu_2||_{2,2}|\bb_1| \le ||\Phi_1|||\Phi_2|+||\Phi_2|||\Phi_1|.$$
Consequently,
\begin{equation}\label{ext-trib4}
 \begin{split}
  |\langle\mb(\Phi_1,\Phi_2), \Phi_3\rangle|\le C\max(1,\mu) ||\phi_3||\Big[2 |\Phi_1|\,\, ||\Phi_2||+ ||\Phi_1|||\Phi_2|\Big].
 \end{split}
\end{equation}
From \eqref{ext-trib4} we obtain 
\begin{equation}\label{ext-trib5}
 |\mb(\Phi,\Phi)|_{\ve^\ast}\le 3 C \max(1,\mu) \left(|\Phi| \,\, || \Phi||\right),
\end{equation}
which implies that if $\Phi=(\bu;\bb)$ belongs to $L^2(0,T;\ve)\cap L^\infty(0,T;\h)$ then $\mb(\Phi, \Phi)$ is an element of $L^2(0,T;\ve^\ast)$.
\end{proof}
Thanks to all these preliminary consideration, the problem \eqref{*} can be
rewritten in the following abstract form
\begin{equation}  \label{2}
\begin{cases}
\frac{\partial \by}{\partial t}+ A\by+A_p \by+\mb(\by,\by)=g, \\
\by(0)=\by_0.%
\end{cases}%
\end{equation}
where $\by=(\bu;\bb)$ is a solution of \eqref{*} and
$\by_0=(\bu_0; \bb_0)$. From now on, we will work with \eqref{2}.
The remaining part of this work is devoted to the analysis of
\eqref{2}. In the next two sections we will mainly study the
existence of its weak solutions and their long-time behavior.
\section{Existence of weak solution}
This section is devoted to the investigation of the existence of solutions of \eqref{2}. Before we do so, 
let us define explicitly the concept of solutions that are of interest to us.
\begin{Def}\label{def-weak-sol}
 Let $T>0$ and $\by_0\in H$. A weak solution of \eqref{2} on $[0,T]$ is a function $\by\in L^\infty(0,T;\h)\cap L^2(0,T;\ve)$ such that
\begin{equation*}
 \frac{\partial \by}{\partial t}\in L^2(0,T;\ve^\ast),
\end{equation*}
and
\begin{equation*}
 \by(t)-\by(\tau)+\int_\tau^t \mathcal{A}\by(s) ds+\int_\tau^t \mathcal{A}_p\by(s) ds+\int_\tau^t \mb(\by(s),\by(s)) ds=\int_\tau^t g(s) ds,
\end{equation*}
holds as an equality in $\ve^\ast$ for almost everywhere $t,\tau\in [0,T]$ with $t\ge \tau$.
\end{Def}
If $\by\in L^\infty(0,T;\h)\cap L^2(0,T;\ve)$ and $\frac{\partial \by}{\partial t}\in L^2(0,T;\ve^\ast)$, then $\by\in C(0,T;\ve^\ast)\cap C(0,T;\h_\omega)$, where $\h_\omega$ denotes the
space $\h$ endowed with the weak topology.
Therefore the initial condition $\by(0)=\by_0\in \h$ is meaningful.

The main result of this section is formulated below.
\begin{thm}\label{MAIN-EX}
 Let $\mathbf{T}$ and $\Sigma$
satisfy \eqref{tens1}-\eqref{tens3}, $\by_0\in \h$, $g\in
L^2(0,T;\ve^\ast)$ and $p\in (1,\frac{2n+6}{n+2}]$.
  Then there exists at least one solution of \eqref{2} in the sense of Definition \ref{def-weak-sol}.
\end{thm}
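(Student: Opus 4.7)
The plan is to follow the blended Galerkin--monotonicity--compactness scheme hinted at just before the statement. I would take $\omega_i=(\phi_i;\psi_i)$ built from the eigenfunctions in \eqref{spec-a-1}--\eqref{spce-a-2}, which form an orthonormal basis of $\h$ that is simultaneously orthogonal in $\ve$, and search for Galerkin approximations $\by^m=\sum_{i=1}^m c_i^m(t)\omega_i$ of \eqref{2} projected onto $\Span\{\omega_1,\dots,\omega_m\}$. The resulting ODE system has continuous right-hand side---$\mathcal{A}_p$ is continuous on finite-dimensional subspaces by \eqref{tens3} and $\bo$ is trilinear---so Peano's theorem furnishes local solutions $\by^m$.

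Testing with $\by^m$, using $\bo(\by^m,\by^m,\by^m)=0$ from \eqref{10}, the identity $\langle\mathcal{A}\by,\by\rangle=\|\by\|^2$ inherent in \eqref{def-A}, and $\langle\mathcal{A}_p\by^m,\by^m\rangle\ge 0$ from \eqref{4}, a standard Gronwall argument (absorbing $\langle g,\by^m\rangle$ via Young) yields
\[
\sup_{t\in[0,T]}|\by^m(t)|^2+\int_0^T\|\by^m(s)\|^2\,ds\le C\bigl(|\by_0|,\|g\|_{L^2(0,T;\ve^\ast)}\bigr),
\]
which globalises $\by^m$ to $[0,T]$. Remark~\ref{rem-1} then bounds $\mathcal{A}_p\by^m$ in $L^2(0,T;\ve^\ast)$, \eqref{ext-trib5} does the same for $\mb(\by^m,\by^m)$, and $\mathcal{A}\by^m$ is trivially bounded there; hence so is $\partial_t\by^m$.

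Since $\ve\subset\h\subset\ve^\ast$ is a compact Gelfand chain, Aubin--Lions extracts a subsequence converging to some $\by$ weakly in $L^2(0,T;\ve)$, weak-$\ast$ in $L^\infty(0,T;\h)$, strongly in $L^2(0,T;\h)$, with $\partial_t\by^m\rightharpoonup\partial_t\by$ in $L^2(0,T;\ve^\ast)$ and, after a further extraction, $\by^m(t)\rightharpoonup\by(t)$ in $\h$ for a.e.\ $t$. The strong $L^2(0,T;\h)$ convergence together with \eqref{ext-trib6} identifies $\mb(\by^m,\by^m)\rightharpoonup\mb(\by,\by)$ in $L^2(0,T;\ve^\ast)$; setting also $\mathcal{A}_p\by^m\rightharpoonup\chi$ in $L^2(0,T;\ve^\ast)$, I obtain the limit equation $\partial_t\by+\mathcal{A}\by+\chi+\mb(\by,\by)=g$, with $\by(0)=\by_0$ inherited from $\by^m(0)=P_m\by_0\to\by_0$ in $\h$.

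The main obstacle is identifying $\chi=\mathcal{A}_p\by$, which I would resolve by Minty's monotonicity trick. The regularity $\by\in L^2(0,T;\ve)$ with $\partial_t\by\in L^2(0,T;\ve^\ast)$ gives $\by\in C([0,T];\h)$ and the energy equality, which expresses $\int_0^T\langle\chi,\by\rangle\,ds$ explicitly in terms of $|\by_0|^2$, $|\by(T)|^2$, $\int_0^T\langle\mathcal{A}\by,\by\rangle\,ds$ and $\int_0^T\langle g,\by\rangle\,ds$. Writing the analogous identity for $\by^m$ and invoking weak lower semicontinuity of $|\cdot|$ and of the $\ve$-norm gives
\[
\limsup_{m\to\infty}\int_0^T\langle\mathcal{A}_p\by^m,\by^m\rangle\,ds\le\int_0^T\langle\chi,\by\rangle\,ds.
\]
Combining this with the monotonicity inequality \eqref{l4} applied to $\by^m$ against an arbitrary $\bw\in L^2(0,T;\ve)$, passing to the limit, then setting $\bw=\by-\lambda\bv$ with $\bv\in L^2(0,T;\ve)$, dividing by $\lambda>0$ and letting $\lambda\downarrow 0$ (using hemicontinuity of $\mathcal{A}_p$ implied by \eqref{tens3}) yields $\chi=\mathcal{A}_p\by$. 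The delicate technical issues are justifying the energy identity for the limit (standard Lions--Magenes given the regularity assembled above) and making the $\limsup$ control sharp across the full subcritical range $p\in(1,\frac{2n+6}{n+2}]$, where the convective and $p$-structured terms must be controlled simultaneously by the same uniform bound.
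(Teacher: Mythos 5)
Your proposal is correct and follows essentially the same route as the paper: Galerkin approximation on the spectral basis $(\phi_k;\psi_k)$, the energy estimate via $\bo(\by^m,\by^m,\by^m)=0$ and $\langle\mathcal{A}_p\by^m,\by^m\rangle\ge 0$, uniform bounds on $\mathcal{A}_p\by^m$, $\mb(\by^m,\by^m)$ and $\partial_t\by^m$ in $L^2(0,T;\ve^\ast)$, Aubin--Lions compactness to pass to the limit in the convective term, and Minty's monotonicity trick (energy identity for the limit, $\limsup$ comparison, perturbation $\by-\lambda\bv$ with $\lambda\downarrow 0$) to identify the weak limit of $\mathcal{A}_p\by^m$. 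No substantive differences from the paper's argument.
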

 The proof of this statement is based on a blending of Galerkin approximation and
compactness-monotonicity method. Several steps are needed to achieve the target we are aiming for in the current part of the article.

\underline{\textsc{Step 1: Galerkin Approximation}}

For this step we consider the space $\ve^m$ and the operator $P_m$
described as follows:

\noindent for any positive integer $m$ we set
\begin{equation}\label{Gal-Bas}
 \ve^m= \Span \{\Psi_k=(\phi_k; \psi_k): k=1, 2, 3, \dots, m\},
\end{equation}
and $P_m$ the orthogonal projection from $\ve^\ast$, $\h$ onto
$\ve^m.$ It is well known (see, for instance,
\cite{MALEKetal}) that
\begin{equation}\label{Or-proj}
 |P_m|_{\mathcal{L}(\ve^\ast; \ve^\ast)}\le 1.
\end{equation}
 We look for a
continuous function $\bym(t)$ taking values in $\ve^m$ such that
\begin{equation}\label{Gal-eq1}
\begin{split}
\frac{\partial \bym}{\partial t}(t)+ P_m\left(\mathcal{A}\bym(t)+
\mathcal{A}_p\bym(t) +\mb(\bym(t), \bym(t)) \right)=P_m g,\\
\bym(0)=\bym_0,
\end{split}
\end{equation}
where $\bym_0$ is the orthogonal projection of $\by_0$ with
respect to the scalar product of $\h$ onto $\ve^m$. The system
\eqref{Gal-eq1} is a system of ordinary differential equations
with locally continuous coefficients, thus the existence of a
continuous function $\bym(t)$ on a short interval $[0,T_m]$ is
ensured by Peano's theorem. The global existence will follow from
the a priori estimates. As mentioned in the introduction $C$ will
describe positive constants depending only on the data (not on
$m$) and which may change from one term to the next.

\underline{\textsc{Step 2: Derivation of a priori estimates}}

To prove the compactness of our Galerkin solution which will allow us to pass to the limit in \eqref{Gal-eq1}, we need to derive
several crucial estimates.

\noindent Multiplying \eqref{Gal-eq1} by $\bym(t)$ yields
\begin{equation*}
\begin{split}
\left(\frac{\partial \bym(t)}{\partial t}, \bym(t)\right)+ \langle
P_m(\mathcal{A}\bym(t)+\mathcal{A}_p\bym(t),
\bym(t))\rangle=\langle P_m g(t),\bym(t)\rangle,
\end{split}
\end{equation*}

where we have used the fact that
\begin{equation*}
\begin{split}
\langle P_m
\mb(\bym(t),\bym(t)),\bym(t)\rangle=\langle\mb(\bym(t),\bym(t)),
\bym(t)\rangle,\\
=0 \quad (\text{ thanks to \eqref{10}}.)
\end{split}
\end{equation*}
From this we infer that
\begin{equation}\label{GAL-EQ1}
\frac{1}{2}\frac{d|\bym(t)|^2}{d
t}+||\bym(t)||^2+\langle\mathcal{A}_p\bym(t),\bym(t)\rangle=\langle
g(t),\bym(t)\rangle.
\end{equation}
Owing to \eqref{4} we obtain from the last estimate that
\begin{equation*}
\frac{1}{2}\frac{d}{dt} |\bym(t)|^2 + ||\bym(t)||^2\le
||g(t)||_{\ve^\ast}||\bym(t)||.
\end{equation*}
Using Young's inequality yields
\begin{equation*}
\frac{d}{dt} |\bym(t)|^2 + ||\bym(t)||^2\le ||g(t)||^2_{\ve^\ast}.
\end{equation*}
Equivalently,
\begin{equation*}
|\bym(t)|^2 + \int_0^t||\bym(s)||^2ds \le |\bym_0|^2+\int_0^t
||g(s)||^2_{\ve^\ast}ds.
\end{equation*}
Owing to \eqref{Or-proj} we have 
\begin{equation}\label{Gal-eq2}
|\bym(t)|^2 + \int_0^t||\bym(s)||^2ds \le C(\by_0, g, T),
\end{equation}
where
$$ C(\by_0, g, T)=|\by_0|^2+\int_0^T g(t) dt,$$
for any $m>0$. Since the constant $C(\by_0, g, T)$ does not depend
on $m$ we have $T_m=T$.

\underline{\textsc{Step 3: Passage to the limit}}

Thanks to \eqref{Gal-eq2} and part (ii) of Lemma \ref{lem1} and
part (iii) of Lemma \ref{lem2}, we can derive that $\frac{\partial
\bym}{\partial t}$ belongs to a bounded set of
$L^2(0,T;\ve^\ast)$.
%  Since $\ve^\ast$, $\h$
% and $\ve$ are reflexive and separable, then we can infer from
% Alaoglu-Banach theorem and a
By a diagonal process we can find
a subsequence of $\bym$ which is not relabelled and a function
$\by$ such that
\begin{align}
\bym\rightarrow \by \text{ weakly-star in } L^\infty(0,T;\h),\label{Gal-eq3}\\
\frac{\partial \bym}{\partial t} \rightarrow \frac{\partial
\by}{\partial t} \text{ weakly in }
L^2(0,T;\ve^\ast),\label{Gal-eq5} \\
\bym\rightarrow \by
\text{ weakly in } L^2(0,T;\ve).\label{Gal-eq4}
\end{align}
Furthermore, by applying the compactness result in \cite[Lemma 5.1]{LIONS} we
can check that
\begin{equation}\label{Gal-eq6}
\bym\rightarrow \by \text{ strongly in } L^2(0,T;\h).
\end{equation}
Also, owing to \eqref{Gal-eq5} we see that
\begin{equation}\label{Gal-eq7}
\bym(T)\rightarrow \by(T) \text{ weakly in } \h.
\end{equation}
Now to complete the proof of the existence of weak solution we
have to pass to the limit in \eqref{Gal-eq1}.

Since $\mathcal{A}$ is a continuous linear mapping from
$L^2(0,T;\ve)$ into $L^2(0,T;\ve^\ast)$, then we have that
\begin{equation}\label{Gal-eq8}
P_m\mathcal{A}\bym \rightarrow \mathcal{A}\by \text{ weakly in }
L^2(0,T;\ve^\ast).
\end{equation}
Thanks to part (iii) of Lemma \ref{lem2} and \eqref{Gal-eq2}
$P_m\mb(\bym,\bym)$ belongs to a bounded set of
$L^2(0,T;\ve^\ast)$. Taking advantage of \eqref{Gal-eq6} and
\eqref{Gal-eq4}, we will show that
\begin{equation}\label{Gal-eq9}
P_m\mb(\bym,\bym)\rightarrow \mb(\by,\by) \text{ weakly in }
L^2(0,T;\ve^\ast).
\end{equation}
To this end let
$$\mathbb{D}=\{\Phi=\chi(t)\Psi_k: \chi(t)\in \mathbb{C}^\infty_c(0,T)\text{ and } k=1,2,\dots\},$$
where $\{\Psi_k; k=1,2, \dots\}$ is defined in \eqref{Gal-Bas}. It
is clear that this set is dense in $L^2(0,T;\ve)$.  Owing to
\cite[Proposition 21.23]{Zeidler-2A}, the claim \eqref{Gal-eq9} is
achieved if we prove that
\begin{equation*}
\int_0^T \langle\mb(\byms,\byms)-\mb(\by,\by), \Psi_k\rangle
\chi(s) ds\rightarrow 0,
\end{equation*}
for any $\Phi=\chi(t)\Psi_k\in \mathbb{D}$.  For this purpose, we
rewrite the last identity in the following form
\begin{equation*}%\label{Gal-eq9-A}
\begin{split}
\int_0^T \langle\mb(\byms,\byms)-\mb(\by,\by),
\Psi_k\rangle\chi(s) ds=I_1+I_2.
\end{split}
\end{equation*}
where
\begin{align*}
I_1=\int_0^T\langle \mb(\byms-\by(s),\byms),\Psi_k\rangle
\chi(s)ds,\\
I_2=\int_0^T \langle \mb(\by(s),\by(s)-\bym(s)),\Psi_k\rangle
\chi(s)ds.
\end{align*}
 For fixed $\Phi$ and $\Xi$ the mapping
$\Upsilon\mapsto \int_0^T \langle \mb(\Xi,\Upsilon),\Psi_k\rangle
\chi(s)ds$ is a continuous linear functional on $L^2(0,T;\ve)$.
Hence by invoking \eqref{Gal-eq4} $I_2$ converges to 0 as
$m\rightarrow \infty$. Next, we easily derive from
\eqref{ext-trib-A} that
\begin{equation*}
\begin{split}
\left\lvert\int_0\langle \mb(\byms-\by(s),\byms),\Psi_k\rangle
\chi(s)ds\right\rvert\le C \left(\int_0^T |\byms-\by(s)| ||\byms||ds\right)\\
\times ||\Phi||_{L^\infty([0,T]\times \mathcal{Q})},
\end{split}
\end{equation*}
which together with H\"older's inequality and \eqref{Gal-eq2}
imply that
\begin{equation*}
\begin{split}
\left\lvert\int_0\langle \mb(\byms-\by(s),\byms),\Psi_k\rangle
\chi(s)ds\right\rvert\le C \left(\int_0^T
|\byms-\by(s)|^2ds\right)^\frac{1}{2}\\
\times ||\Phi||_{L^\infty([0,T]\times \mathcal{Q})}.
\end{split}
\end{equation*}
Thanks to \eqref{Gal-eq6} the left hand side of this last
inequality will converge to 0 as $m\rightarrow \infty$. Hence we
have prove that $I_1$ converges to 0 as $m\rightarrow \infty$
which also shows that \eqref{Gal-eq9} holds.

 In view of part (ii) of Lemma \ref{lem1} and
\eqref{Gal-eq2}, the nonlinear term $\mathcal{A}_p\bym$ is an
element of a bounded set of $L^2(0,T;\ve^\ast)$. Therefore there
exists an element $\Omega$ of $L^2(0,T;\ve^\ast)$ such that
\begin{equation}\label{Gal-eq10}
\mathcal{A}_p\bym \rightarrow \Omega \text{ weakly in }
L^2(0,T;\ve^\ast),
\end{equation}
as $m$ approaches $\infty$. It remains to be shown that
\begin{equation}\label{Gal-eq11}
\Omega=\mathcal{A}_p \by \text{ in } L^2(0,T;\ve^\ast).
\end{equation}
In vertu of \eqref{Gal-eq5}-\eqref{Gal-eq10}, we can pass to the
limit in \eqref{Gal-eq1} and find that $\by$ satisfies
\begin{equation*}
\frac{\partial \by}{\partial
t}+\mathcal{A}\by+\Omega+\mb(\by,\by)=g
\end{equation*}
as an equality in $L^2(0,T;\ve^\ast)$. Therefore by an integration
by parts we see that
\begin{equation}\label{55}
\int_0^T \langle\Omega,\by(s)\rangle ds=\int_0^T
\langle\Sigma(\by(s)),\by(s)\rangle
ds-\frac{1}{2}(|\by(T)|^2-|\by(0)|^2),
\end{equation}
where the functional $\Sigma$ is defined by
\begin{equation*}
\Sigma(\Phi)=g-\mathcal{A}\Phi-\mb(\Phi,\Phi).
\end{equation*}
Since $\mathcal{A}_p$ is monotone, then for any $\Phi\in
L^2(0,T;\ve)$ there holds
\begin{equation*}
\langle\mathcal{A}_p\bym,\bym\rangle\ge \langle\mathcal{A}_p\bym,
\Phi\rangle+\langle \mathcal{A}_p \Phi, \bym-\Phi\rangle.
\end{equation*}
But from \eqref{Gal-eq1} we see that
\begin{equation*}
\langle\mathcal{A}_p\bym,\bym\rangle=\int_0^T\langle
\Sigma(\byms), \byms\rangle-\frac{1}{2}(|\bym(T)|^2-|\bym(0)|^2),
\end{equation*}
which along with the former estimate and \eqref{55} yield (after
taking the limit as $m\rightarrow \infty$) that
\begin{equation}\label{231}
\int_0^T \langle \Omega-\mathcal{A}_p\Phi(s),
\by(s)-\Phi(s)\rangle ds\ge 0.
\end{equation}
At this juncture, we let $\Phi=\by-\beta \Psi$, for any $\beta>0$
and $\Psi\in L^2(0,T;\ve)$. Thus after division by $\beta$ and
letting $\beta\rightarrow 0$, \eqref{231} leads to
\begin{equation*}
\int_0^T \langle \Omega-\mathcal{A}_p \by, \Psi\rangle
ds\ge0,\,\,\,  \forall \Psi\in L^2(0,T;\ve),
\end{equation*}
from which the sought convergence
\eqref{Gal-eq10}-\eqref{Gal-eq11} follows. Hence thanks to the
convergence \eqref{Gal-eq3}, \eqref{Gal-eq5}, \eqref{Gal-eq4},
\eqref{Gal-eq8}, \eqref{Gal-eq9} and \eqref{Gal-eq11} we see after
passage to the limit that there exists at least a weak solution
(in sense of Definition \ref{def-weak-sol}) $\by$ to \eqref{2}.
%%%%%%%%%%%%%%%%%%%%%%%%%%%%%%%%%%%%%%%%%%%%%%%%%%%%%%%%%%%%%%%%%%%%%%%%%%%%%%%%%%%%
%%%%%%%%%%%%%%%%%%%%%%%%%%%%%%% Trajectory attractor%%%%%%%%%%%%%%%%%%%%%%%%%%%%%%%%
%%%%%%%%%%%%%%%%%%%%%%%%%%%%%%%%%%%%%%%%%%%%%%%%%%%%%%%%%%%%%%%%%%%%%%%%%%%%%%%%%%%%
\section{Trajectory attractor of the solutions of \eqref{2}}
In this section we study the behavior of a weak solution $\by$ of
\eqref{2} for large time. We mainly investigate the existence of
an attractor \`a la Chepyzhov and Vishik. For this purpose we
closely follow the presentation in \cite{VISHIK+CHEPYZHOV} which
is the pioneering article dealing with the uniform trajectory
attractor of non-autonomous evolutionary nonlinear partial
differential equations. This method does not require that a
uniqueness of the solution holds. Hereafter, for a Banach space
$E$ we set
\begin{equation*}
 L^p_b(\rp;E)=\{\bu: \bu(s) \in \Ll^p(\rp;E), \sup_{t\ge 0}\int_t^{t+1} \bu(s) ds<\infty\},
\end{equation*}
 and
\begin{equation*}
 ||\bu||^p_{L^p_b(\rp;E)}=\sup_{t\ge 0}\int_t^{t+1} ||\bu(s)||_E^p ds,
\end{equation*}
for any $\bu\in L^p_b(\rp;E)$, $p\in [1,\infty]$. For any $\tau,
t\ge 0$ with $t\ge \tau$ we define the space $\mathcal{F}_{loc}^+$
by setting
\begin{equation*}
 \mathcal{F}_{loc}^+=\left\{\bu: \bu\in \Ll^2(\rp;\ve)\cap \Ll^\infty(\rp; \h), \frac{\partial \bu}{\partial t} \in \Ll^2(\rp; \ve^\ast)\right\},
\end{equation*}
which is endowed with the metric defined by
\begin{equation*}
 ||\bu||_{\mathcal{F}_{loc}^+}=||\bu||^2_{\Ll^2(\rp ;\ve)}+||\bu||^2_{\Ll^\infty(\rp; \h)}+\left|\left|\frac{\partial \bu}{\partial t}\right|\right|^2_{\Ll^2(\rp; \ve^\ast)},
\end{equation*}
for any $\bu\in \mathcal{F}_{loc}^+$.

\begin{Def}\label{topo-F}
By $\Theta_+^{loc}$ we mean the space $\mathcal{F}_{loc}^+$ endowed with the following convergence topology:

A sequence $\bu^m\subset \mathcal{F}_{loc}^+$ converges to $\bu$ in $\Theta_+^{loc}$ as $m\rightarrow \infty$ if
\begin{align*}
 &\bu^m\rightarrow \bu \text{ weakly in } L^2(\tau, t; \ve),\\
  &\bu^m\rightarrow \bu \text{ weakly-star in } L^\infty(\tau, t; \h),\\
&\frac{\partial \bu^m}{\partial t} \rightarrow \frac{\partial \bu}{\partial t} \text{ weakly in } L^2(\tau, t; \ve^\ast),
\end{align*}
for any compact interval $[\tau,t]\subset \rp$.
\end{Def}

% For a function $\bu$ defined on $\rp$ we denote its restrcition to the interval $[\tau,t]$ by $\Pi_{\tau,t}\bu$. The space $\mathcal{F}^{loc}_{+}$ is the space of vector-valued
% functions $\bu$ defined on $\rp$ such that $\Pi_{\tau,t}\bu\in \mathcal{F}_{\tau,t}$. In a similar way we can define $\Theta^{loc}_{+}$ as the topological space of $\bu$ defined on
% $\rp$ such that $\Pi_{\tau,t}\bu\in \Theta_{\tau,t}$.

We define $\mathcal{F}^a_{+}$ as the space of functions $\bu$ defined on $\rp$ such that
\begin{equation*}
 ||\bu||_{\mathcal{F}^a_{+}}=||\bu||^2_{L^2_{b} (\rp ;\ve)}+||\bu||^2_{L^\infty_{b} (\rp; \h)}+\left|\left|\frac{\partial \bu}{\partial t}\right|\right|_{L^2_{b} (\rp; \ve^\ast)},
\end{equation*}
 is finite.
Note that $\mathcal{F}^a_{+}$ is a Banach space with norm $||\bu||_{\mathcal{F}^a_{+}}$ and any bounded set of $\mfa$ is precompact in $\Theta^{loc}_+$.
%%%%%%%%%%%%%%%%%%%%%%%%%%%%%%%%%%%%%%%%%%%%%%%%%%%%%%%%%%%%%%%%%%%%%%%%%%
Let $g_0\in \Ll^2(\rp;\ve^\ast)$ be an external force which is translation bounded, that is
\begin{equation*}
 ||g_0||^2_{L^2_b(\rp;\ve^\ast)}<\infty.
\end{equation*}
 \begin{Def}
  The synbol space $\Sigma(g_0)$ is the closure of the set $\{g_0(t+\cdot), t\ge 0\}$ in $\Ll^2(\rp,\ve^\ast)$.
 \end{Def}
From now on, we denote by $\Llw^2(\rp;\ve^\ast)$ the space
$\Ll^2(\rp;\ve^\ast)$ equipped with the weak topology. Under some
constraints on $g_0$ the space $\Sigma(g_0)$ enjoys some
properties which will be important here. We state them
below.
\begin{lem}\label{lem-traj}
 If $g_0\in \Ll^2(\rp;\ve)$ is translation bounded then $\Sigma(g_0)$ is a compact and complete metric space.
\end{lem}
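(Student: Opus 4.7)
The plan is to reduce the statement to a standard translation–compactness argument in the local weak topology of $L^2_{loc}(\rp; \ve^\ast)$. The hypothesis that $g_0$ is translation bounded means $\sup_{t\ge 0}\int_t^{t+1}\|g_0(s)\|_{\ve^\ast}^2\,ds<\infty$, and by the hint in the excerpt the target is to show $\{g_0(t+\cdot): t\ge 0\}$ is precompact in $\Llw^2(\rp;\ve^\ast)$; its closure $\Sigma(g_0)$ then inherits the ambient local weak topology and the conclusion will follow once a compatible metric is produced.

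First I would metrize the local weak topology on any ball of $L^2_{loc}(\rp;\ve^\ast)$. Since $\ve$ is separable, each space $L^2(0,N;\ve)$ is separable, so one can choose a countable dense set $\{\phi_{N,k}\}_{k\ge 1}$ of its unit ball and define
\begin{equation*}
  d(f,h)=\sum_{N=1}^\infty\sum_{k=1}^\infty 2^{-N-k}\,\frac{\bigl|\int_0^N\langle f(s)-h(s),\phi_{N,k}(s)\rangle\,ds\bigr|}{1+\bigl|\int_0^N\langle f(s)-h(s),\phi_{N,k}(s)\rangle\,ds\bigr|}.
\end{equation*}
This metric induces exactly the weak-local topology on norm-bounded subsets of $L^2_{loc}(\rp;\ve^\ast)$.

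Next I would show the translation orbit $\mathcal{O}=\{g_0(t+\cdot):t\ge 0\}$ is uniformly bounded on every interval: for any $N\in\mathbb{N}$ and any $t\ge 0$,
\begin{equation*}
  \int_0^N\|g_0(t+s)\|_{\ve^\ast}^2\,ds=\int_t^{t+N}\|g_0(s)\|_{\ve^\ast}^2\,ds\le N\,\|g_0\|_{L^2_b(\rp;\ve^\ast)}^2.
\end{equation*}
Given a sequence $(g_0(t_m+\cdot))_{m\ge 1}\subset\mathcal{O}$, the restrictions to $[0,1]$ form a bounded sequence in the reflexive space $L^2(0,1;\ve^\ast)$, so by Banach--Alaoglu (reflexive version) one extracts a weakly convergent subsequence. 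A Cantor diagonal argument across $N=1,2,\dots$ then yields a single subsequence converging weakly in $L^2(0,N;\ve^\ast)$ for every $N$, hence in $\Llw^2(\rp;\ve^\ast)$. So $\mathcal{O}$ is precompact, and its closure $\Sigma(g_0)$ is compact in the local weak topology, uniformly bounded (translation boundedness is preserved under weak-local limits by lower semicontinuity of the norm), and therefore a compact subset of the metric space $(B_R,d)$ where $B_R$ is a suitable translation-bounded ball.

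The only mild subtlety—which is the one point I expect to spend care on—is confirming that the limit point of any such weakly convergent subsequence is itself translation bounded with the same bound as $g_0$, so that $\Sigma(g_0)$ sits inside the metrized ball; this follows from the weak lower semicontinuity of $h\mapsto \int_t^{t+1}\|h(s)\|_{\ve^\ast}^2\,ds$ applied uniformly in $t$. Compactness in a metric space implies completeness of the induced metric subspace, which finishes the proof.
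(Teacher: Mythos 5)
Your proof is correct and follows essentially the same route as the paper: translation boundedness gives uniform bounds on every finite interval, Banach--Alaoglu plus a diagonal extraction yields weak-local precompactness of the translation orbit, the local weak topology is metrizable on norm-bounded sets because $\ve$ is separable, and compactness of the resulting metric space gives completeness. The only difference is that the paper obtains each of these steps by citing Proposition 6.8, Lemma 6.3 and Section 8 of Chepyzhov and Vishik's paper on trajectory attractors, whereas you prove them directly, so your version is a self-contained and equally valid rendering of the same argument.
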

\begin{proof}
 If $g_0\in \Ll^2(\rp;\ve)$ is translation bounded the it follows from \cite[Proposition 6.8]{VISHIK+CHEPYZHOV} that $g_0$ is translation compact in $\Llw^2(\rp;\ve^\ast)$, that is
the closure of $\{g_0(t+\cdot), t\ge 0\}$ in $\Ll^2(\rp,\ve^\ast)$ is compact in $\Llw^2(\rp;\ve^\ast)$. Denote this closure by $\mathcal{H}(g_0)$. From
\cite[Lemma 6.3]{VISHIK+CHEPYZHOV}, $\mathcal{H}(g_0)$ as a topological subspace of $\Llw^2(\rp;\ve^\ast)$ is metrizable and the corresponding metric space is complete. Hence
$\mathcal{H}(g_0)$ can be viewed as a compact and complete metric space. But from \cite[Section 8]{VISHIK+CHEPYZHOV}, $\mathcal{H}(g_0)$ coincides with $\Sigma(g_0)$ up to a homeomorphism.
Therefore, $\Sigma(g_0)$ as a topological subset of $\Ll^2(\rp;\ve^\ast)$ is a compact and complete metric space.
\end{proof}
\begin{Rem}
Before we carry on, we should note that the result of the  Lemma \ref{lem-traj} holds true if we consider $\mathbb{R}$ in place of $\rp$.
\end{Rem}
\begin{Def}
For a translation bounded function $g\in L^2_{loc}(\rp; \ve^\ast)$ we define the trajectory space $\mathcal{K}^{+}_{g}$ as the set of weak solutions
of \eqref{2} on any compact interval $I$ such that the energy inequality
\begin{equation}\label{NRJ}
 e^{\alpha t}|\by(t)|^2-e^{\lambda \tau}|\by(\tau)|^2+\int_\tau^t e^{\lambda s}\left[||\by(s)||^2-\lambda |\by(s)|^2\right]\le \frac{\Omega_\lambda(g)}{\lambda}
(e^{\lambda t}-e^{\lambda \tau}),
\end{equation}
holds for almost every $\tau,t\in I$ with $t\ge \tau+1$ and
\begin{equation*}
 \Omega_\lambda(g)=\sup_{h\in [1,2]}\sup_{t\ge0}\left(\frac{\lambda\int_0^h ||g(s+t)||^2_{\ve^\ast}e^{\lambda s} ds}{e^{\lambda h}-1}\right).
\end{equation*}
\end{Def}
Now let $$\mck_{\Sigma(g_0)}=\cup_{g\in \Sigma(g_0)}\mck_{g}^{+}.$$ The space $\mck_{\Sigma(g_0)}$ is a topological space equipped with the topology of
$\Theta^{loc}_{+}$. We recall some crucial properties of $\mck_{g}^{+}$ and $\mck_{\Sigma(g_0)}$.
%%%%%%%%%%%%%%%%%%%%%%%%%%%%%%%%%%%%%%%%%%%%%%%%%%%%%%%%%%
% Now let us consider an external force $g_0\in \Ll^2(\rp;
% \ve^\ast)$. We denote by $\Llw^2(\rp;\ve^\ast)$ the space
% $\Ll^2(\rp;\ve^\ast)$ equipped with the weak topology. We assume
% that $g_0$ is translation compact in $\Llw^2(\rp;\ve^\ast)$. From
% \cite{VISHIK+CHEPYZHOV}, this is equivalent to
% \begin{equation*}
%  \sup_{t\in \rp}\int_t^{t+1} ||g_0(s)||^2_{\ve^\ast} ds<\infty,
% \end{equation*}
% that is $g_0$ is translation bounded in $\Ll^2(\rp;\ve^\ast)$.
%  Now we define a space $\Sigma(g_0)$ as the closure of the $\{g_0(t+\cdot), t\ge 0\}$ in $\Llw^2(\rp;\ve^\ast)$. By Lemma 6.3 of \cite{VISHIK+CHEPYZHOV}, $\Sigma(g_0)$ is metrizable
%  and complete.
%  Moreover, $\Sigma(g_0)$ is homeomorphic to the closure of $\{g_0(t+\cdot), t\ge 0\}$ in $\Ll^2(\rp;\ve^\ast)$.
% Hence we can view $\Sigma(g_0)$ as a compact and complete metric space as long as $g_0$ is translation bounded in $\Ll^2(\rp; \ve^\ast)$.
%
\begin{lem}\label{lem-traj2}
\begin{enumerate}
 \item[(i)] For any $\by_0\in \h$ and a translation bounded function $g \in \Ll^2(\rp;\ve^\ast)$, $\mck^{+}_g$ is non-empty and the following energy inequality holds
\begin{equation}\label{NRJ-C}
 e^{\lambda t}|\by(t)|^2-e^{\lambda \tau}|\by(\tau)|^2+\int_\tau^t e^{\lambda s}\left[||\by(s)||^2-\lambda |\by(s)|^2\right]\le \frac{\Omega_\lambda(g)}{\lambda}
(e^{\lambda t}-e^{\lambda \tau}),
\end{equation}
 for almost every $\tau,t\in \rp$ with $t\ge \tau+1$.
% Here
% \begin{equation*}
%  \Eta_\lambda(g_0)=\sup_{h\in [1,2]}\sup_{t\ge0}\left(\frac{\lambda\int_0^h ||g_0(s+t)||^2_{\ve^\ast}e^{\lambda s ds}{e^{\lambda h}-1}\right).
% \end{equation*}
\item[(ii)] The set $\mck^+_g$ is closed in $(\thl,\Sigma(g_0))$ endowed with the usual topology of cartesian product space,  and
$\mck_{\Sigma(g_0)}$ is closed in $\Theta^{loc}_{+}.$
\end{enumerate}
\end{lem}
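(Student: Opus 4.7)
The plan splits along the two parts of the lemma. For Part (i), I will extend the existence result of Theorem \ref{MAIN-EX} to all of $\rp$ and derive the energy inequality \eqref{NRJ-C} at the Galerkin level before passing to the limit; for Part (ii), I will rerun the compactness/monotonicity argument of Step 3 of that same proof on a generic converging sequence of weak solutions. First, to show non-emptiness of $\mck_g^+$, I apply Theorem \ref{MAIN-EX} on $[0,N]$ for each integer $N$ and perform a diagonal extraction to produce a weak solution $\by$ on $\rp$. To obtain \eqref{NRJ-C}, I return to the Galerkin equation \eqref{Gal-eq1}, test it against $\bym$, and invoke the monotonicity estimate $\langle\mathcal{A}_p\bym,\bym\rangle\geq 0$ from Lemma \ref{lem1}(i) together with Young's inequality to obtain
\[
\frac{d}{ds}|\bym(s)|^2 + \|\bym(s)\|^2 \leq \|g(s)\|_{\ve^\ast}^2.
\]
Multiplying by $e^{\lambda s}$ with $\lambda$ the Poincar\'e constant from Remark \ref{Rem1}, rearranging and integrating over $[\tau,t]$ yield the Galerkin analog of \eqref{NRJ-C} with $\int_\tau^t e^{\lambda s}\|g(s)\|_{\ve^\ast}^2\,ds$ on the right-hand side, which I bound by $\frac{\Omega_\lambda(g)}{\lambda}(e^{\lambda t}-e^{\lambda \tau})$ by partitioning $[\tau,t]$ into subintervals of length in $[1,2]$ and invoking the definition of $\Omega_\lambda(g)$. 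Passing $m\to\infty$ relies on weak lower semicontinuity of $|\cdot|$ in $\h$ and $\|\cdot\|$ in $\ve$ on the left, strong convergence in $L^2(0,T;\h)$ for the $\lambda|\by|^2$-term, and a Lebesgue-point argument to handle the instant $\tau>0$.

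For Part (ii), to show that $\mck_g^+$ is closed in $\thl$, let $\by^k\in\mck_g^+$ converge in $\thl$ to some $\by$. The convergences listed in Definition \ref{topo-F}, combined with the Aubin--Lions-type compactness yielding strong convergence in $L^2(\tau,t;\h)$ on each compact subinterval, permit me to reuse the limit passage from Step 3 of the proof of Theorem \ref{MAIN-EX}: the transport term $\mb(\by^k,\by^k)$ is handled by combining strong $L^2(0,T;\h)$ convergence with weak $L^2(0,T;\ve)$ convergence via Lemma \ref{lem2}(iii), and the $p$-structure term $\mathcal{A}_p\by^k$ is identified through Minty's monotonicity trick exactly as in the existence proof. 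Thus $\by$ is a weak solution, and the energy inequality \eqref{NRJ-C} passes to the limit by the same weak lower semicontinuity argument used in Part (i), giving $\by\in\mck_g^+$.

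For the closedness of $\mck_{\Sigma(g_0)}$ in $\thl$, I take a sequence $\by^k\in\mck_{\Sigma(g_0)}$ with $\by^k\to\by$ in $\thl$ and select $g^k\in\Sigma(g_0)$ with $\by^k\in\mck_{g^k}^+$. By Lemma \ref{lem-traj}, $\Sigma(g_0)$ is compact and metrizable in $\Llw^2(\rp;\ve^\ast)$, so after extraction $g^k\to g\in\Sigma(g_0)$ in that topology. The previous passage-to-the-limit argument then applies with the additional weak convergence of the forcing on each compact interval, and $\Omega_\lambda(g^k)$ is controlled uniformly in $k$ by the translation boundedness of $g_0$, so the limit $\by$ satisfies \eqref{NRJ-C} with right-hand side $g$, yielding $\by\in\mck_g^+\subset\mck_{\Sigma(g_0)}$. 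The main technical obstacle throughout is identifying the weak limit of the nonlinear $p$-structure term $\mathcal{A}_p\by^k$: since this operator is only monotone and enjoys no direct sequential continuity under weak convergence in $L^2(0,T;\ve)$, I must rely on Minty's trick, which is precisely why preserving the sign-definite term $\langle\mathcal{A}_p\bym,\bym\rangle\geq 0$ in the Galerkin energy identity is essential.
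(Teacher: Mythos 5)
Your proposal is correct, and on the key step it takes a genuinely different (though equally standard) route from the paper. For the energy inequality in part (i), the paper does not integrate the Galerkin differential inequality directly between two instants; instead it multiplies \eqref{GAL-EQ1} by a nonnegative test function $\Phi\in C^\infty_0(\tau,t)$, passes to the limit in the resulting distributional inequality (using strong $L^2(\tau,t;\h)$ convergence plus dominated convergence for the $|\bym|^2$ terms and weak lower semicontinuity for the $\|\bym\|^2$ term), and only then recovers the pointwise-in-time form \eqref{NRJ-C} via the Chepyzhov--Vishik integration lemma (Lemma \ref{lem-ViC}) and the argument of their Corollary 9.4. You instead integrate at the Galerkin level, bound $\int_\tau^t e^{\lambda s}\|g\|^2_{\ve^\ast}\,ds$ by partitioning into subintervals of length in $[1,2]$ (which is exactly what the cited corollary does), and pass to the limit in the two-instant inequality. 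The one point you must be careful about, and which your ``Lebesgue-point argument'' correctly addresses, is the boundary term $-e^{\lambda\tau}|\bym(\tau)|^2$: weak convergence gives lower semicontinuity in the wrong direction for a term carrying a minus sign, so you genuinely need the a.e.\ convergence $|\bym(s)|\to|\by(s)|$ extracted from the strong $L^2_{loc}(\rp;\h)$ convergence, and the conclusion then holds only for a.e.\ $\tau,t$ --- which is all the lemma claims. The paper's test-function route buys you this for free without discussing pointwise values; your route is more elementary but must make the a.e.\ selection explicit. For part (ii) the paper simply cites \cite[Propositions 8.3 and 8.5]{VISHIK+CHEPYZHOV}, whereas you supply the argument; your version is sound, with the caveat that when you rerun Minty's trick on a sequence of weak solutions $\by^k$ (rather than Galerkin approximations) you need the Lions--Magenes energy identity $\langle\partial_t\by^k,\by^k\rangle=\tfrac12\tfrac{d}{dt}|\by^k|^2$, valid since $\by^k\in L^2(\tau,t;\ve)$ with $\partial_t\by^k\in L^2(\tau,t;\ve^\ast)$, in order to compute $\limsup_k\int\langle\mathcal{A}_p\by^k,\by^k\rangle$ and again handle the two boundary terms by a.e.\ convergence and weak lower semicontinuity respectively.
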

Before we proceed to the proof of Lemma \ref{lem-traj2} we recall
the following result which is taken from \cite[Lemma
9.2]{VISHIK+CHEPYZHOV}.
\begin{lem}\label{lem-ViC}
Let $f, \chi\in \Ll^1(\rp)$ and assume that the following inequality holds:
\begin{equation*}
 -\int_0^\infty \Phi^\prime(s) f(s) ds+\eta \int_0^\infty f(s) \Phi(s) ds\le \int_0^\infty \chi(s) \Phi(s) ds,
\end{equation*}
for any $\Phi\in C^\infty_0(\rp;\rp)$ and for some $\eta\in \mathbb{R}$. Then
\begin{equation*}
 f(t)e^{\eta t}-f(\tau)e^{\eta \tau}\le \int_\tau^t \chi(s) e^{\eta s} ds,
\end{equation*}
for almost all $t,\tau\in \rp$ with $t\ge\tau$.
\end{lem}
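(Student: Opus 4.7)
The plan is to reinterpret the hypothesis as a distributional inequality of the form $f' \le \chi - \eta f$ tested against nonnegative bumps, convert it into one for the integrating-factor quantity $F(s) := e^{\eta s} f(s)$, and then integrate it against a smooth approximation of the indicator of $[\tau, t]$.

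First I would plug in the modified test function $\Phi(s) = e^{\eta s}\Psi(s)$ for an arbitrary $\Psi\in C^\infty_0(\rp;\rp)$; such a $\Phi$ still lies in $C^\infty_0(\rp;\rp)$, so it is admissible. Using $\Phi'(s) = \eta e^{\eta s}\Psi(s) + e^{\eta s}\Psi'(s)$, the two $\eta$-terms on the left-hand side of the hypothesis cancel exactly, and the inequality collapses to
\begin{equation*}
-\int_0^\infty \Psi'(s)\,[e^{\eta s} f(s)]\,ds \;\le\; \int_0^\infty \Psi(s)\,[e^{\eta s}\chi(s)]\,ds, \qquad \forall\, \Psi \in C^\infty_0(\rp;\rp).
\end{equation*}
Setting $F(s) = e^{\eta s} f(s)$ and $X(s) = e^{\eta s}\chi(s)$, this is exactly the statement that $F' \le X$ in the distributional sense against nonnegative test functions.

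Next I would fix $0 < \tau < t$ and build a family $\Psi_\epsilon \in C^\infty_0(\rp;\rp)$ supported in $[\tau-\epsilon, t+\epsilon]$, identically $1$ on $[\tau, t]$, and smoothly and monotonically interpolating on the two transition intervals, so that $\Psi_\epsilon'$ is supported in $[\tau-\epsilon,\tau] \cup [t, t+\epsilon]$ with total mass $+1$ and $-1$ on the two pieces respectively. Substituting $\Psi_\epsilon$ and letting $\epsilon \to 0$, the right-hand side converges to $\int_\tau^t X(s)\,ds$ by dominated convergence (using $X \in L^1_{loc}$), while the Lebesgue differentiation theorem applied to $F \in L^1_{loc}(\rp)$ yields
\begin{equation*}
-\int_0^\infty \Psi_\epsilon'(s) F(s)\,ds \;\longrightarrow\; F(t) - F(\tau)
\end{equation*}
whenever $\tau$ and $t$ are Lebesgue points of $F$. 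Passing to the limit produces $F(t) - F(\tau) \le \int_\tau^t X(s)\,ds$, which is exactly the desired estimate after unwinding the definitions of $F$ and $X$.

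The main delicacy is the a.e. qualifier: the bump-function argument gives the inequality only at Lebesgue points of $F$, and one also needs $\tau$ strictly positive so that the mollification window $[\tau-\epsilon, t+\epsilon]$ lies inside the admissible domain for test functions. Both exclusions are on null sets, so the conclusion holds for almost every pair $(\tau, t)$ with $\tau \le t$, as claimed. No further regularity of $f$ beyond the assumed local integrability is used.
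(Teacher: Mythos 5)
Your proof is correct. Note, however, that the paper itself does not prove this lemma: it is stated as a quoted auxiliary result, attributed to Chepyzhov--Vishik (Lemma 9.2 of the cited work), so there is no internal proof to compare against. Your argument --- substituting $\Phi(s)=e^{\eta s}\Psi(s)$ so that the two $\eta$-terms cancel and the hypothesis becomes the distributional inequality $\frac{d}{ds}\bigl(e^{\eta s}f(s)\bigr)\le e^{\eta s}\chi(s)$ against nonnegative test functions, and then testing against smoothed indicators of $[\tau,t]$ and passing to the limit at Lebesgue points --- is precisely the standard proof of that cited result, and all the delicate points are handled: the admissibility of the modified test function, the use of $\chi\in \Ll^1(\rp)$ for dominated convergence, the restriction to Lebesgue points of $e^{\eta s}f(s)$ (which, since $e^{\eta s}$ is continuous and positive, form the same full-measure set as the Lebesgue points of $f$), and the need for $\tau>0$ so that the mollification window stays inside $\rp$. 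The only implicit ingredient worth making explicit is that $\Psi_\epsilon'$ on each transition interval must be taken of approximate-identity type (one sign, bounded by $C/\epsilon$, total mass $\pm1$) for the Lebesgue-point convergence to apply; a standard smooth monotone ramp does this. So your write-up supplies a complete, self-contained proof where the paper only gives a reference.
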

\begin{proof}[Proof of Lemma \ref{lem-traj2}]
Let $\bym_0$ be the projection of $\by_0$ on the space $\ve^m$ and let $\bym$ be a sequence of Galerkin solutions of \eqref{2}. We have seen from the proof of Theorem \ref{MAIN-EX} that
for any $\by_0\in \h$ and $g\in \Ll^2(\rp;\ve^\ast)$ there exists at least one solution to \eqref{2}. Now it remains to prove the energy inequality. To do so
let us consider an element $\Phi\in C^\infty_0(\tau, T)$ with
$\Phi\ge 0$. We have 
\begin{equation}\label{Gal-EQ2}
\int_\tau^T \Phi^\prime(t)\left||\bym(t)|-|\by(t)|\right|^2dt \le
\int_\tau^t \Phi^\prime(t)|\bym(t)-\bym|^2 dt,
\end{equation}
which implies that $\sqrt{\Phi^\prime}|\bym|$ converges to
$\sqrt{\Phi^\prime}|\by|$ strongly in $L^2(\tau,T)$. hence, we can
extract a subsequence from $\sqrt{\Phi^\prime}|\bym|$ (not
relabelled) such that it converges to $\sqrt{\Phi^\prime}|\by|$
almost everywhere in $[\tau,T]$.  Since $\Phi^\prime(t)$ is
continuous on a compact set of $\mathbb{R}$ then it follows from
\eqref{Gal-eq2} that the sequence of numerical functions
$\Phi^\prime(t)|\bym(t)|^2$ is uniformly bounded. Thus we derive
from \eqref{Gal-EQ2} and the Lebesgue Dominated Convergence Theorem
that
\begin{equation}\label{Nrj-eq1}
-\int_\tau^t \Phi^\prime(s) |\bym(s)|^2 ds\rightarrow -\int_\tau^t
\Phi^\prime(s) |\by(s)|^2 ds.
\end{equation}
As $\sqrt{\Phi(s)}$ is uniformly bounded on $[\tau, t]$, therefore
it follows from \eqref{Gal-eq2} that $\sqrt{\Phi(s)}\bym(s)$
belongs to a bounded set of $L^2(\tau,t; \ve)$ and
\begin{equation*}
\sqrt{\Phi(s)}\bym(s)\rightarrow \sqrt{\Phi(s)}\by(s) \text{
weakly in } L^2(\tau, t; \ve),
\end{equation*}
as $m\rightarrow \infty$. Moreover,
\begin{equation}\label{Nrj-eq2}
\int_\tau^t \Phi(s) ||\by(s)||^2 ds\le {\lim \inf}_{m\rightarrow
\infty}\int_\tau^t \Phi(s) ||\bym(s)||^2 ds.
\end{equation}
By taking a test function $\Phi\in C^\infty_0(\tau, t)$ with
$\Phi\ge 0$, we obtain from \eqref{GAL-EQ1} that
\begin{equation*}%\label{Nrj-eq3}
\begin{split}
-\frac{1}{2}\int_\tau^t \Phi^\prime (s) |\bym(s)|^2ds +\int_\tau^t
\Phi(s)||\bym(s)||^2 ds +\int_\tau^t \Phi(s)
\langle\mathcal{A}_p\bym(s),\bym(s)\rangle ds \\=\int_\tau^t \Phi(s)
\langle g(s),\bym(s)\rangle ds.
\end{split}
\end{equation*}
Taking into account \eqref{4} and using Young's inequality we get
from the last estimate that
\begin{equation}\label{Nrj-eq4}
-\int_\tau^t \Phi^\prime (s) |\bym(s)|^2ds + \int_\tau^t
\Phi(s)||\bym(s)||^2 ds \le  \int_\tau^t
\Phi(s)||g(s)||^2_{\ve^\ast}ds.
\end{equation}
Owing to \eqref{Nrj-eq1} and \eqref{Nrj-eq2} we pass to the limit
in \eqref{Nrj-eq4} and obtain the following energy inequality
\begin{equation*}%\label{NRJ-DD}
 -\int_\tau^t \Phi^\prime (s) |\by(s)|^2ds + \int_\tau^t
\Phi(s)||\by(s)||^2 ds \le  \int_\tau^t
\Phi(s)||g(s)||^2_{\ve^\ast}ds,
\end{equation*}
%Owing to the Poincar\'e's-like inequality $$ \lambda |\by|^2\le ||\by||^2,$$
which can be rewritten in the following form
\begin{equation*}%\label{Nrj-DE}
\begin{split}
-\int_\tau^t \Phi^\prime (s) |\bym(s)|^2ds + \lambda \int_\tau^t
\Phi(s)|\bym(s)|^2 ds \\
\le  \int_\tau^t
\Phi(s)\left(||g_0(s)||^2_{\ve^\ast}-[||\by(s)||^2-\lambda
|\by(s)|^2]\right)ds.
\end{split}
\end{equation*}

This estimate along with Lemma \ref{lem-ViC} yields
\begin{equation}
|\by(t)|^2e^{\lambda t}-|\by(\tau)|^2e^{\lambda \tau}+
\int_\tau^{t}\left(||\by(s)||^2-|\by(s)|^2\right)e^{\lambda
s}ds\le \int_\tau^{t} ||g(s)||^2_{\ve^\ast} e^{\lambda s} ds,
\end{equation}
for almost all $\tau, t\in \rp$ with $t\ge \tau$. To end the proof
of item (i) we can use the same argument as in \cite[Corollary
9.4]{VISHIK+CHEPYZHOV}.

Item (ii) can be shown by using the same argument as in
\cite[Proposition 8.5]{VISHIK+CHEPYZHOV} (see also
\cite[Proposition 8.3]{VISHIK+CHEPYZHOV}).
\end{proof}
We define a semi-group $\{\mathbb{S}_t, t\ge 0\}$ acting on $\Sigma(g_0)$ and $\mck_{\Sigma(g_0)}$ by
\begin{equation}
 \mathbb{S}_t\bu(\cdot)=\bu(t+\cdot), t\ge 0.
\end{equation}
Important facts concerning its action on $\Sigma(g_0)$ and $\mck_{\Sigma(g_0)}$ are stated in the following lemma.
\begin{lem}\label{lem-traj3}
 Assume that $g_0$ is translation bounded in $\Ll^2(\rp;\ve^\ast)$ (i.e., $g_0$ is translation compact in $\Llw^2(\rp; \ve^\ast)$). Then
\begin{enumerate}
 \item[(i)] $\mathbb{S}_t$ is continuous on $\Sigma(g_0)$ in the
topology of $\Ll^2(\rp;\ve^\ast)$.
\item[(ii)] We have
$\mathbb{S}_t\Sigma(g_0)\subset \Sigma(g_0)$ for any $t\ge 0$.
\item[(iii)] We have
\begin{equation}
 \mathbb{S}_t\mck^+_g\subset \mfa \subset \mfl, \forall g\in \Sigma(g_0).
\end{equation}
\item[(iv)] For any $g\in \Sigma(g_0)$  we have
\begin{equation}\label{tr-coo}
 \mathbb{S}_t\mck_g^+\subset \mathcal{K}^+_{\mathbb{S}_t g},
\end{equation}
which implies that
\begin{equation}
 \mathbb{S}_t\mck_{\Sigma(g_0)}\subset \mck_{\Sigma(g_0)}, t\ge 0.
\end{equation}
\end{enumerate}
\end{lem}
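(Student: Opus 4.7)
The plan is to treat items (i)--(iv) in order. Items (i), (ii), and (iv) are essentially structural, while (iii) is the analytically substantial part; throughout I will use the energy inequality \eqref{NRJ-C} of Lemma \ref{lem-traj2} as the principal input, together with the component estimates collected in Section \ref{Prel}.

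For (i), continuity of the shift $\mathbb{S}_t$ on the (metrizable, by Lemma \ref{lem-traj}) space $\Sigma(g_0)$ reduces to the standard fact that translation is continuous in $\Ll^2(\rp;\ve^\ast)$, which is a direct change-of-variables statement against test functions with compact support in $\rp$. Item (ii) is hull invariance: any $h\in\Sigma(g_0)$ is a limit of $g_0(\cdot+s_n)$ with $s_n\ge 0$, and the further-shifted sequence $g_0(\cdot+s_n+t)$ converges to $\mathbb{S}_t h$ while still lying in $\{g_0(\cdot+s):s\ge 0\}$ since $s_n+t\ge 0$, so its limit belongs to the closure $\Sigma(g_0)$.

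For (iii), I proceed in three sub-steps. First, applying \eqref{NRJ-C} with $\tau=0$, absorbing $-\lambda|\by|^2$ against $\|\by\|^2$ via the Poincar\'e-type inequality \eqref{Poincare} with the same $\lambda$, and exploiting the finiteness of $\Omega_\lambda(g)$ coming from translation boundedness of $g$, I deduce a uniform bound $\sup_{t\ge 0}|\by(t)|\le M$. Second, applying \eqref{NRJ-C} on each interval $[t,t+1]$ and using this $\h$-bound at both endpoints yields $\sup_{t\ge 0}\int_t^{t+1}\|\by(s)\|^2\,ds\le C$, i.e.\ $\by\in L^2_b(\rp;\ve)$. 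Third, I recover the time derivative from the equation \eqref{2} rewritten as $\partial_t\by = g - \mathcal{A}\by - \mathcal{A}_p\by - \mb(\by,\by)$ and estimate each term in $L^2_b(\rp;\ve^\ast)$: the linear part is controlled by $C\|\by\|$, the $p$-structure term by Remark \ref{rem-1} (splitting into $1<p\le 2$ and $2<p\le\frac{2n+6}{n+2}$), the nonlinearity by $C\max(1,\mu)|\by|\,\|\by\|$ via \eqref{ext-trib5}, and $g$ by its translation boundedness. Combining these, $\partial_t\by\in L^2_b(\rp;\ve^\ast)$, so $\mathbb{S}_t\by\in\mfa$; the inclusion $\mfa\subset\mfl$ is immediate from the definitions.

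For (iv), the abstract equation \eqref{2} is invariant under time translation: if $\by$ solves it on a compact interval with right-hand side $g$, then $\by(\cdot+t)$ solves it on the shifted interval with right-hand side $\mathbb{S}_t g$. Since the supremum defining $\Omega_\lambda$ is translation invariant, $\Omega_\lambda(\mathbb{S}_t g)=\Omega_\lambda(g)$, and the energy inequality \eqref{NRJ} transfers unchanged, giving $\by(\cdot+t)\in\mathcal{K}^+_{\mathbb{S}_t g}$. Combined with item (ii), this yields $\mathbb{S}_t\mck_{\Sigma(g_0)}\subset\mck_{\Sigma(g_0)}$. The chief obstacle is the third sub-step of (iii): in the super-quadratic range $2<p\le\frac{2n+6}{n+2}$, the estimate \eqref{est-rem2} mixes an $L^\infty_t$-factor in $|\by|$ with a power of $\int_t^{t+1}\|\by(s)\|^2\,ds$, so the order of the three sub-steps cannot be reversed, and some care is required to convert these uniform bounds into the $L^2_b$-norm needed to conclude $\mathbb{S}_t\by\in\mfa$.
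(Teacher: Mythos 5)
Your proposal is correct and follows essentially the same route as the paper, which disposes of (i) and (ii) as straightforward, obtains (iii) as a direct consequence of the energy inequality \eqref{NRJ-C} (your three sub-steps are precisely the estimates the paper later carries out in detail in the proof of Lemma \ref{TRAJ-lem}), and proves (iv) by the same shift-of-solution argument. One small caveat: in general one only has $\Omega_\lambda(\mathbb{S}_t g)\le\Omega_\lambda(g)$ rather than equality, so transferring the energy inequality to the shifted symbol is slightly more delicate than your statement suggests --- but the paper's own one-line proof of (iv) glosses over exactly the same point.
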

\begin{proof}
Proofs of (i) and (ii) are straightforward.

Item (iii) is a direct consequence of \eqref{NRJ-C}. Item (iv) is
proved as follows: If $g\in \Sigma(g_0)$ and $\by \in \mck_g^+$,
then $\by(t+\cdot), t\ge 0$ is a solution of \eqref{2} with
initial condition $\by(t)$ and external force $g(t+\cdot)$; that
is, $\mathbb{S}_t\by \in \mck_{\mathbb{S}_t g}$.
\end{proof}

To proceed with our investigation we introduce the concept of
attractor of our interest. The definitions we state below are
taken from \cite{VISHIK+CHEPYZHOV}.
\begin{Def}%[\cite[Definition 3.3]{VISHIK+CHEPYZHOV}]
A set $P\subset \Theta^{loc}_{+}$ is a uniformly (wrt to $g\in
\Sigma(g_0)$) attracting set for the family $\{\mck_g, g\in
\Sigma(g_0)\}$ in the topological space $\Theta^{loc}_{+}$ if for
any bounded set $\mathbb{B}$ of $\mathcal{F}^a_{+}$ and
$\mathbb{B}\subset \mck_{\Sigma(g_0)}^+$ the set $P$ attracts
$\mathbb{S}_t \mathbb{B}$ as $t\rightarrow \infty$; that is, for
any neighborhood $\mathcal{O}(P)$ in $\Theta^{loc}_+$ there exists
$t_1>0$ such that $\mathbb{S}_t\mathbb{B}\subset \mathcal{O}(P)$
for any $t\ge t_1$.
\end{Def}
%%%%%%%%%%%%%%%%%%%%%%%%
\begin{Def}%[\cite[Definition 3.4]{VISHIK+CHEPYZHOV}]
 A set $\mathcal{A}_{\Sigma(g_0)}$ is a uniform (wrt to $g\in \Sigma(g_0)$) trajectory attractor for the
 family $\{\mck_g, g\in \Sigma(g_0)\}$ in the topological space $\Theta^{loc}_{+}$ if
 \begin{enumerate}
\item $\mathcal{A}_{\Sigma(g_0)}$ is compact,

\item $\mathcal{A}_{\Sigma(g_0)}$ is strictly invariant, that is,
$\mathbb{S}_t \mathcal{A}_{\Sigma(g_0)}=\mathcal{A}_{\Sigma(g_0)}$
for any $t\in \rp$,

\item and $\mathcal{A}_{\Sigma(g_0)}$ is a minimal uniformly
attracting set for $\{\mck^{+}_g, g\in \Sigma(g_0)\}$.
 \end{enumerate}
\end{Def}

Before we state the main result of this section we still need to introduce the concept of $\omega$-limit set.
\begin{Def}
 The $\omega$-limit set of $\Sigma(g_0)$ is defined through
\begin{equation}\label{omega-lim}
 \omega(\Sigma(g_0))= \bigcap_{t\ge 0}\Big[\bigcup_{h\ge t}\mathbb{S}_h \Sigma(g_0) \Big]_{\Sigma(g_0)},
\end{equation}
where $[\,\,\cdot\,\,]_{\Sigma(g_0)}$ designates the closure in $\Sigma(g_0)$.
\end{Def}
For any translation bounded function $g_0\in\Ll^2(\rp;\ve^\ast)$,
Lemma \ref{lem-traj} implies that the metric space $\Sigma(g_0)$
is compact and complete. Hence owing to the item (i) of Lemma
\ref{lem-traj3} and well-known theorem on continuous semigroup
acting on compact and complete metric space (see for example
\cite{BABIN+VISHIK}, \cite{TEMAM-INF} ), $\mathbb{S}_t$ as a
continous semigroup acting on $\Sigma(g_0)$ has a global attractor
$\mathbb{A}$ in $\Sigma(g_0)$. The set $\mathbb{A}$ coincides with
$\omega(\Sigma(g_0))$ which implies that
\begin{equation}\label{inv-ga}
 \mathbb{S}_t\omega(\Sigma(g_0))=\omega(\Sigma(g_0)),
\end{equation}
 for any $t\ge 0$.

From \eqref{omega-lim} we clearly see that $\omega(\Sigma(g_0))\subset \Sigma(g_0)$, so it makes sense to define the analog of
$\{\mck_g^+, g\in \Sigma(g_0)\}$ on $\omega(\Sigma(g_0)$ by taking  $g$ in the smaller space $\omega(\Sigma(g_0))$ instead of in $\Sigma(g_0)$. Therefore, we can also define
 the notion of uniform attractor for $\{\mck_g^+, g\in \omega(\Sigma(g_0))\}$ with respect to $g\in \omega(\Sigma(g_0))$.

%%%%%%%%%%%%%%%%%%%%
Now we formulate the main result of this section.
\begin{thm}\label{TRAJ-THM}
Let $g_0$ be translation bounded in $\Ll^2(\rp; \ve^\ast)$ (that
is, it is translation compact in $\Llw^2(\rp; \ve^\ast)$), then
the translation semigroup $\{\mathbb{S}_t, t\ge 0\}$
 acting on $\mck_{\Sigma(g_0)}^+$ possesses a uniform trajectory attractor $\mathcal{A}_{\Sigma(g_0)}$. The set $\mathcal{A}_{\Sigma(g_0)}$
 is bounded in $\mathcal{F}^a_+$ and compact in $\Theta^{loc}_+$. It is strictly invariant:
\begin{equation}
 \mathbb{S}_t\mathcal{A}_{\Sigma(g_0)}=\mathcal{A}_{\Sigma(g_0)},
\end{equation}
for all $t\ge 0$. Furthermore, we have
 \begin{equation}
\mathcal{A}_{\Sigma(g_0)}=\mathcal{A}_{\omega(\Sigma(g_0))},
% =\Pi_{+}\cup_{\zeta
% \in Z(g_0)}\mck_{\zeta}=\Pi_+ \mck_{Z(g_0)}.
 \end{equation}
 where $\mathcal{A}_{\omega(\Sigma(g_0))}$ is the uniform
 (with respect to $g\in \omega(\Sigma(g_0))$) attractor of $\{\mck_g^+, g\in \omega(\Sigma(g_0) \}$.
% The kernel $\mck_\zeta$ is not empty for any $\zeta \in Z(g_0)$, and the set $\mck_{Z(G_))}$ is bounded in $\mathcal{F}^a$ and compact in
%    $\Theta^{loc}_+$.
\end{thm}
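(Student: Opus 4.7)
The plan is to follow the Chepyzhov--Vishik abstract machinery for uniform trajectory attractors of translation semigroups, the framework of which is already prepared by Lemmas \ref{lem-traj}--\ref{lem-traj3}. The scheme reduces to three steps: (a) exhibit a uniformly absorbing set $\mathbb{B}_0$ for the family $\{\mck_g^+, g\in\Sigma(g_0)\}$ which is bounded in $\mfa$; (b) invoke the precompactness of bounded subsets of $\mfa$ in $\thl$ to conclude that $\mathbb{B}_0$ is in fact a uniformly attracting set in $\thl$; (c) combine this with the closedness statement of Lemma \ref{lem-traj2}(ii) and the translation-covariance \eqref{tr-coo} of Lemma \ref{lem-traj3} to deploy the abstract existence theorem for trajectory attractors.

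For step (a) I would start from the energy inequality \eqref{NRJ-C}. Fixing $\tau=0$ and dividing by $e^{\lambda t}$ one obtains an estimate of the form
\begin{equation*}
|\by(t)|^2+\int_t^{t+1}e^{\lambda(s-t)}\bigl(\Vert\by(s)\Vert^2-\lambda|\by(s)|^2\bigr)\,ds\le e^{-\lambda t}|\by(0)|^2+\frac{\Omega_\lambda(g)}{\lambda}(1-e^{-\lambda t}),
\end{equation*}
valid for every $\by\in\mck_g^+$ with $g\in\Sigma(g_0)$. Since $g_0$ is translation bounded, $\Omega_\lambda(g)$ is uniformly bounded on $\Sigma(g_0)$, so for any $\mathbb{B}\subset\mck_{\Sigma(g_0)}^+$ bounded in $\mfa$ there exists $t_1(\mathbb{B})$ past which $\Vert\mathbb{S}_t\by\Vert_{L^\infty_b(\rp;\h)}$ and $\Vert\mathbb{S}_t\by\Vert_{L^2_b(\rp;\ve)}$ are uniformly bounded by a constant $R_0$ depending only on the data and $\Vert g_0\Vert_{L^2_b}$. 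Next, from the abstract equation \eqref{2} I bound the time derivative by $\Vert\partial_t\by\Vert_{\ve^\ast}\le\Vert\mathcal{A}\by\Vert_{\ve^\ast}+\Vert\mathcal{A}_p\by\Vert_{\ve^\ast}+\Vert\mb(\by,\by)\Vert_{\ve^\ast}+\Vert g\Vert_{\ve^\ast}$, and use Remark \ref{rem-1} (i.e.\ \eqref{est-rem}--\eqref{est-rem2}) together with \eqref{ext-trib5} to control the nonlinear contributions on each unit interval in terms of the already bounded quantities. This yields an absorbing ball $\mathbb{B}_0$ in $\mfa$.

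For step (b), since every bounded set of $\mfa$ is precompact in $\thl$, the set $\mathbb{B}_0$ is precompact in $\thl$ and uniformly attracts every bounded $\mathbb{B}\subset\mck_{\Sigma(g_0)}^+$ in the topology of $\thl$. For step (c), Lemma \ref{lem-traj3} gives $\mathbb{S}_t\mck_{\Sigma(g_0)}^+\subset\mck_{\Sigma(g_0)}^+$ and Lemma \ref{lem-traj2}(ii) gives closedness of $\mck_{\Sigma(g_0)}^+$ in $\thl$. Together with the uniformly absorbing set $\mathbb{B}_0$, this is exactly the input of the Chepyzhov--Vishik existence theorem (cf.\ \cite[Theorem 8.1]{VISHIK+CHEPYZHOV}), which then produces the uniform trajectory attractor
\begin{equation*}
\mathcal{A}_{\Sigma(g_0)}=\omega\bigl(\mathbb{B}_0\bigr)=\bigcap_{t\ge 0}\Bigl[\bigcup_{h\ge t}\mathbb{S}_h\mathbb{B}_0\Bigr]_{\thl},
\end{equation*}
bounded in $\mfa$, compact in $\thl$, strictly invariant and minimal among uniformly attracting closed sets.

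Finally, for the identity $\mathcal{A}_{\Sigma(g_0)}=\mathcal{A}_{\omega(\Sigma(g_0))}$ I rely on the invariance \eqref{inv-ga} of the global attractor $\omega(\Sigma(g_0))\subset\Sigma(g_0)$ of $\mathbb{S}_t$ acting on the symbol space. Indeed, since $\omega(\Sigma(g_0))$ uniformly attracts $\Sigma(g_0)$ in $\Ll^2(\rp;\ve^\ast)$ and since the solution map $g\mapsto\mck_g^+$ has a suitable upper semicontinuity property (a consequence of Lemma \ref{lem-traj2}(ii) together with the passage-to-the-limit argument used in the existence proof), any trajectory in $\mck_g^+$ with $g\in\Sigma(g_0)$ is attracted in $\thl$ by the trajectories arising from symbols in $\omega(\Sigma(g_0))$; the two uniform attracting sets therefore coincide by minimality. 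I expect the chief technical obstacle to lie in step (a), namely the uniform-in-time control of $\partial_t\by$ in $L^2_b(\rp;\ve^\ast)$: the $p$-structure term $\mathcal{A}_p$ behaves differently in the subquadratic and superquadratic regimes, so the estimate must be carried out by splitting into the two cases $p\in(1,2]$ and $p\in(2,\tfrac{2n+6}{n+2}]$ exactly as in Remark \ref{rem-1}, and in the latter case the $\sup$-norm factor of $|\Phi|$ must be re-absorbed via the already-established $L^\infty_b(\rp;\h)$ bound.
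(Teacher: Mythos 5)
Your proposal follows essentially the same route as the paper: the paper's proof of Theorem \ref{TRAJ-THM} likewise rests on the energy inequality \eqref{NRJ-C}, the estimates of Remark \ref{rem-1} and \eqref{ext-trib5} (packaged as Lemma \ref{TRAJ-lem}, with exactly the case split $p\in(1,2]$ versus $p\in(2,\tfrac{2n+6}{n+2}]$ you anticipate) to produce a ball $B_{R_0}$ in $\mathcal{F}^a_+$ that is uniformly attracting and compact in $\Theta^{loc}_+$, and then combines this with Lemmas \ref{lem-traj}--\ref{lem-traj3} to invoke the abstract Chepyzhov--Vishik existence theorem, which also yields the identity $\mathcal{A}_{\Sigma(g_0)}=\mathcal{A}_{\omega(\Sigma(g_0))}$. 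Your plan is correct and matches the paper's argument in both structure and the key technical inputs.
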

% We see from the several preparation we have made to the present point that most of the assumptions of Theorem \ref{TRAJ-THM} are satisfied for the space $\Sigma(g_0)$ and the
% translation semigroup $\{\mathbb{S}_t, t\ge 0\}$.

As in classical theory of dynamical system of PDEs, to show the
existence of a trajectory attractor it is crucial to find a
uniform attracting set for $\{\mathbb{S}_t, t\ge 0\}$ which is
bounded in $\mfa$ and compact in $\thl$. Since $\thl$ is a
topological space endowed with a weak topology defined in
Definition \ref{topo-F} it is enough to construct a bounded set in
$\mfa$ which will attract $\mathbb{S}_t \mathbb{B}$ for any
bounded set $\mathbb{B}\subset \mck_{\Sigma(g_0)}^+$. For the
problem \eqref{*}, the following estimate is the main tool for
such a construction.
\begin{lem}\label{TRAJ-lem}
 Assume that $g_0$ is translation compact in $\Llw^2(\rp; \ve^\ast)$ and let $q=2p-2$. Then there exist positive constants $R_0$ and $C_0$ depending only
on the data of \eqref{*} such that
\begin{equation}
 ||\mathbb{S}_t \by||_{\mathcal{F}^a_{+}}\le C_0 ||\by||^2_{L^\infty(0,1; \h)}e^{-\lambda t}+R_0, \forall t\ge 1, \,\,\, \text{ if } p\in (1,2],
\end{equation}
 and
\begin{equation}\label{75}
 ||\mathbb{S}_t \by||_{\mathcal{F}^a_{+}}\le C_0 ||\by||^q_{L^\infty(0,1; \h)}e^{-\frac{q \lambda t}{2}}+R_0, \forall t\ge 1,
\end{equation}
if $p\in (2, \frac{2n+6}{n+2}]$.
\end{lem}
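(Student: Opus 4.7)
The strategy is to exploit the energy inequality \eqref{NRJ-C}---valid for every $\by \in \mck_g^+$ with $g \in \Sigma(g_0)$ by Lemma \ref{lem-traj2}(i)---together with the Poincar\'e-type inequality \eqref{Poincare} and the $\ve^\ast$-estimates of $\mathcal{A}_p$ and $\mb$ collected in Remark \ref{rem-1} and in \eqref{ext-trib5}. The three summands making up $\|\mathbb{S}_t\by\|_{\mathcal{F}^a_+}$ will be handled successively. Throughout the argument I would abbreviate $M := \|\by\|^2_{L^\infty(0,1;\h)}$ and use that, since $g \in \Sigma(g_0)$, the quantity $\Omega_\lambda(g)$ appearing in \eqref{NRJ-C} is uniformly bounded on $\Sigma(g_0)$ by a constant multiple of $\|g_0\|^2_{L^2_b(\rp;\ve^\ast)}$, so it gets absorbed into constants depending only on the data.

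Since $\lambda|\by|^2 \le \|\by\|^2$, the term $\int_\tau^t e^{\lambda s}(\|\by(s)\|^2 - \lambda|\by(s)|^2)\,ds$ in \eqref{NRJ-C} is nonnegative; dropping it gives, for a.e.\ $\tau, t$ with $t \ge \tau$, the exponential decay
\begin{equation*}
|\by(t)|^2 \le e^{-\lambda(t-\tau)}|\by(\tau)|^2 + \Omega_\lambda(g)/\lambda.
\end{equation*}
Taking the essential infimum over $\tau \in [0,1]$ yields, for $t \ge 1$, the $L^\infty_b$-bound $\mathrm{ess\,sup}_{s\ge t}|\by(s)|^2 \le C e^{-\lambda t}M + R$. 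For the $L^2_b$-control of $\|\by\|$ I would keep the dissipation term in \eqref{NRJ-C}, apply the inequality on $[\tau,\tau+1]$, solve for $\int_\tau^{\tau+1}\|\by(s)\|^2\,ds$, and insert the $L^\infty$-bound just obtained; the outcome is $\int_\tau^{\tau+1}\|\by(s)\|^2\,ds \le C e^{-\lambda \tau} M + R$, and then $\sup_{\tau \ge t}$ controls the second summand of $\|\mathbb{S}_t \by\|_{\mathcal{F}^a_+}$.

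For the $L^2_b$-norm of $\partial_t \by$ I would go back to the abstract equation \eqref{2} and estimate each of $g$, $\mathcal{A}\by$, $\mathcal{A}_p \by$, and $\mb(\by,\by)$ in $\ve^\ast$. The linear piece satisfies $\|\mathcal{A}\by\|_{\ve^\ast} \le C\|\by\|$, while \eqref{ext-trib5} gives $\int_\tau^{\tau+1}\|\mb(\by,\by)\|^2_{\ve^\ast}\,ds \le C\sup|\by|^2\int_\tau^{\tau+1}\|\by(s)\|^2\,ds$. For $\mathcal{A}_p$ I would invoke \eqref{est-rem} when $p \in (1,2]$ (linear in $\int\|\by\|^2$) or \eqref{est-rem2} when $p \in (2,\tfrac{2n+6}{n+2}]$ (nonlinear, of shape $\sup|\by|^{(1-a)q}\bigl(\int\|\by\|^2\bigr)^{qa/2}$). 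Substituting the decay bounds from the previous paragraph, the $p \le 2$ case stays within the form $C M e^{-\lambda \tau}+R$, whereas in the $p > 2$ case the identity $(1-a)q/2 + qa/2 = q/2$ produces a leading contribution $C M^{q/2} e^{-q\lambda \tau/2}$, which is precisely the source of the modified exponent $e^{-q\lambda t/2}$ and power $\|\by\|^q_{L^\infty(0,1;\h)}$ in \eqref{75}. The main technical difficulty lies exactly here: after taking $\sup_{\tau \ge t}$ and summing the three contributions, one must check that every lower-order cross term of the form $M^\alpha e^{-\beta \lambda t}$ (with $\alpha$ smaller than the leading power and $\beta$ no smaller than the leading rate) is dominated by the leading term plus a constant; this can be done by elementary estimates at the cost of enlarging $C_0$ and $R_0$, which will still depend only on the data of \eqref{*}.
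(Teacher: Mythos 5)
Your proposal is correct and follows essentially the same route as the paper: the $L^\infty_b(\rp;\h)$ decay is extracted from the energy inequality \eqref{NRJ-C} via the Poincar\'e-type inequality \eqref{Poincare} and the uniform bound $\Omega_\lambda(g)\le\Omega_\lambda(g_0)$ over $\Sigma(g_0)$, the $L^2_b(\rp;\ve)$ bound is obtained by retaining the dissipation term on unit intervals, and $\partial_t\by$ is controlled from the abstract equation \eqref{2} using \eqref{est-rem}, \eqref{est-rem2} and \eqref{ext-trib5}, with the exponent arithmetic $(1-a)q/2+aq/2=q/2$ producing the modified rate in \eqref{75} exactly as in the paper's Case II.
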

\begin{proof}[\textbf{Proof of Lemma \ref{TRAJ-lem}}]
We distinguish the case $p\in (1,2]$ and $p\in (2, \frac{2n+6}{n+2}]$.

\noindent \underline{\textsc{Case I: $p\in (1,2]$} }

\noindent The proof will be split in several steps. Throughout, $\by$ will denote an element of $\mck_{(\Sigma(g_0))}$ and $g\in \Sigma(g_0)$.

\noindent \texttt{Estimate for $\mathbb{S}_t\by$ in $L^\infty_b(\rp; \h)$}

Owing to the Poincar\'e'-like inequality \eqref{Poincare} we derive from \eqref{NRJ} that
\begin{equation}\label{NRJ-B}
 e^{\lambda t}|\by(t)|^2-e^{\lambda \tau}|\by(\tau)|^2\le \frac{\Omega_\lambda(g)}{\lambda}
(e^{\lambda t}-e^{\lambda \tau}),
\end{equation}
holds for almost every $\tau,t\in I$ with $t\ge \tau+1$.  Recall that for a fucntion $g_0$ which is translation bounded in $\Ll^2(\rp;\ve^\ast)$, the following holds (see \cite[Section 8]{VISHIK+CHEPYZHOV})
$$||g||^2_{L^2_b(\rp;\ve^\ast)}\le  ||g_0||^2_{L^2_b(\rp;\ve^\ast)},$$
which combined with \eqref{NRJ-B} yields
\begin{equation*}%\label{las-est}
 |\by(t+s)|^2\le |\by(s-t)|^2 e^{-\lambda t}+ \frac{\Omega_\lambda(g_0)}{\lambda }(1-e^{-\lambda t}),
\end{equation*}
for $t\ge 1$ and $s\ge 2$. Therefore, we obtain that
\begin{equation}\label{att-set1}
 ||\mathbb{S}_t \by||^2_{L^\infty_b(\rp; \h)}\le ||\by||^2_{L^\infty(0,1;\h)}e^{-\lambda t}+\frac{\Omega_\lambda(g_0)}{\lambda },
\end{equation}
for any $t\ge 1$.

\noindent \texttt{Estimate for $\mathbb{S}_t\by$ in $L^2_b(\rp;\ve)$}

Hereafter let us set $R_1=\frac{\Omega_\lambda(g_0)}{\lambda }$.
By integrating \eqref{NRJ-C} between $t$ and $t+1$ and invoking
\eqref{att-set1}, we obtain that
\begin{equation}\label{att-set2}
 \lambda \int_t^{t+1} |\by(s)|^2 ds\le \lambda ||\by||^2_{L^\infty(0,1;\h)}+ R_1 e^{\lambda t} (e^\lambda -1),
\end{equation}
which along with \eqref{NRJ-C} and \eqref{att-set1} implies that
\begin{equation}\label{att-set3}
 \int_t^{t+1} \left(||\by(s)||^2- \lambda |\by(s)|^2\right)e^{\lambda s} ds\le \lambda R_1 e^{\lambda t} (e^\lambda -1)+ R_1 e^{\lambda t} + ||\by||_{L^\infty(0,1;\h)}^2.
\end{equation}
By vertu of \eqref{att-set2} and \eqref{att-set3} we have 
\begin{equation}\label{att-set4}
 \int_t^{t+1}||\by(s)||^2 e^{\lambda s}ds\le 2 R_1(e^\lambda-1)e^{\lambda t}+ R_1 e^{\lambda t }+(1+\lambda)||\by||^2_{L^\infty(0,1;\h)}.
\end{equation}
For $s\in [t,t+1]$, we can easily check that $e^{\lambda t}
||\by(s)||^2\le e^{\lambda s} ||\by(s)||^2$. Hence, we infer from
\eqref{att-set4} the existence of positive constants $R_2$ and
$C_2$ such that that
\begin{equation}\label{att-set5}
 ||\mathbb{S}_t\by||^2_{L^2_b(\rp;\ve)}\le R_2+C_2 e^{-\lambda t} ||\by||^2_{L^\infty(0,1;\h)},
\end{equation}
for any $t\ge 1$.

\noindent \texttt{Estimate for $\mathbb{S}_t \left(\frac{\partial \by}{\partial t}\right)$ in $L^2_b(\rp; \ve^\ast)$}

From \eqref{2}, it is clear that
\begin{equation}\label{att-set6}
 \left\lvert\left\lvert\frac{\partial \by}{\partial t}\right \rvert \right\rvert^2_{\ve^\ast}\le C \left(\left\lvert\left\lvert\mathcal{A}\by\right \rvert \right\rvert^2_{\ve^\ast}+
\left\lvert\left\lvert\mathcal{A}_p \by\right \rvert
\right\rvert^2_{\ve^\ast}+
\left\lvert\left\lvert\mb(\by,\by)\right \rvert
\right\rvert^2_{\ve^\ast}+ \left\lvert\left\lvert g_0\right \rvert
\right\rvert^2_{\ve^\ast}\right).
\end{equation}
With the definition of $\mathcal{A}$ (see \eqref{def-A}), the estimate \eqref{est-rem} of Remark \ref{rem-1}, and \eqref{ext-trib5} we derive from \eqref{att-set6} that
\begin{equation}\label{83}
\begin{split}
 \int_{t}^{t+1} \left\lvert\left\lvert\frac{\partial \by(s)}{\partial t}\right \rvert \right\rvert^2_{\ve^\ast}
 ds\le C \sup_{s\in [t,t+1]}|\by(s)|^2\int_t^{t+1}||\by(s)||^2 ds+ C \int_t^{t+1} ||g_0(s)||^2 ds\\
+C\int_t^{t+1}||\by(s) ||^2+C\sup_{s\in [t,t+1]}|\by(s)|^2+C
\end{split}
\end{equation}
where the constants involved in this estimate depend only on the
data of \eqref{*}. We infer from this last estimate,
\eqref{att-set1} and \eqref{att-set5} that there exists positive
constants $C_3$ and $R_3$  such that
\begin{equation}\label{att-set7}
 \left( \int_{t}^{t+1} \left\lvert\left\lvert\frac{\partial \by(s)}{\partial t}\right \rvert \right\rvert^2_{\ve^\ast} ds\right)^\frac{1}{2}\le C_3 ||\by||^2_{L^\infty(0,1;\h)}
+R_3,
\end{equation}
 for any $t\ge 1$.

Now, the proof of the lemma follows from \eqref{att-set1}, \eqref{att-set5} and \eqref{att-set7} if $p\in (1,2]$.

\underline{\textsc{Case II: $p\in (2,\frac{2n+6}{n+2}]$} }

\noindent
We should first not that the estimates \eqref{att-set1} and \eqref{att-set5} hold true for the case $p\le 2$ and $p>2$, the main difference lies in th estimating of
 $\mathbb{S}_t \left(\frac{\partial \by}{\partial t}\right)$.
Throughout this step $\frac{q}{2}=p-1$ which will belong to $(1,2)$ provided that $p\in (2,\frac{2n+6}{n+2}]$.
Because of this fact,  we can derive from H\"older's inequality, \eqref{att-set1} and \eqref{att-set5} that
\begin{align*}
||\mathbb{S}_t\by||^2_{L^2_b(\rp;\h)}\le R_4+C_4 e^{-\frac{q\lambda t}{2}} ||\by||^q_{L^\infty(0,1;\h)},\\
||\mathbb{S}_t\by||^2_{L^2_b(\rp;\ve)}\le R_5+C_5
e^{-\frac{q\lambda t}{2}} ||\by||^q_{L^\infty(0,1;\h)},
\end{align*}
for any $t\ge1$.
 Owing to these last inequalities we infer from \eqref{est-rem2}, \eqref{att-set1} and \eqref{att-set5}  that
\begin{equation*}
 \int_t^{t+1} ||\mathcal{A}_p \by(s)||_{\ve^\ast}^2ds \le R_6 +C_6 ||\by||^{2 \frac{(1-a)q}{2}}_{L^\infty_b(0,T;\h)}||\by||^{2\frac{aq}{2}}_{L^\infty_b(0,T;\h)} e^{-\frac{q\lambda t}{2}},
\end{equation*}
which easily implies that
\begin{equation*}
 \int_t^{t+1} ||\mathcal{A}_p \by(s)||_{\ve^\ast}^2ds \le R_7 +C_5 ||\by||^{q}_{L^\infty_b(0,T;\h)}e^{-\frac{q \lambda t}{2}},
\end{equation*}
 With these estimates we can derive from \eqref{83} that
\begin{equation*}%\label{att-set79}
 \left( \int_{t}^{t+1} \left\lvert\left\lvert\frac{\partial \by(s)}{\partial t}\right \rvert \right\rvert^2_{\ve^\ast} ds\right)^\frac{1}{2}\le C_8 e^{-\frac{q\lambda t}{2}}||\by||^q_{L^\infty(0,1;\h)}
+R_8.
\end{equation*}
Now the proof of \eqref{75} follows from all of these estimates.
\end{proof}
\begin{proof}[\textbf{Proof of Theorem \ref{TRAJ-THM}}]

First of all, Lemma \ref{lem-traj} implies that the symbol space
$\Sigma(g_0)$ is a compact and complete metric space. Secondly, we
see from Lemma \ref{lem-traj2} that the family $\{\mck_g^+, g\in
\Sigma(g_0)\}$ is $(\thl, \Sigma(g_0))$-closed. Moreover, it
follows from Lemma \ref{lem-traj3} that the translation semigroup
$\mathbb{S}_t$ is continuous on $\Sigma(g_0)$, and $
\mathbb{S}_t\Sigma(g_0)\subset \Sigma(g_0)$ for all $t\ge0$,  and
the family $\{\mck_g^+, g\in \Sigma(g_0)\}$ is translation
coordinated (that is, it verifies \eqref{tr-coo} ). Thanks to
Lemma \ref{TRAJ-lem} the ball $B_{R_0}$
\begin{equation*}
B_{R_0}=\{\bu\in \mathcal{F}^a_+: ||\bu||_{\mathcal{F}^a_+}\le
2R_0\},
\end{equation*}
is a uniformly attracting set for $\{\mck_{g}^+, g\in
\Sigma(g_0)\}$ in $\thl$. The set $B_{R_0}$ is compact in $\Theta^{loc}_+$
and it is obviously bounded in $\mathcal{F}^a_+$. Thus the
conditions of \cite[Theorems 3.1]{VISHIK+CHEPYZHOV} can
be applied to our case, and this proves our theorem.
\end{proof}
In the next assertion we will study the structure of the uniform trajectory attractor of Theorem \ref{TRAJ-THM}. To this end we should
 extend all the concepts we have introduced throughout this section to the case where the external force $g_0$ has an extension to $\mathbb{R}$. This is possible thanks to the invariance
property \eqref{inv-ga} and arguing exactly as in \cite[Section 4]{VISHIK+CHEPYZHOV}. Then, we denote by $\mathcal{F}_{loc}$ and $\mathcal{F}^a$ the analog of
$\mfl$ and $\mfa$ by taking $\mathbb{R}$ in place of $\rp$. In a similar way we also define $\Theta^{loc}$. For a function $\bu$ defined on $\mathbb{R}$ we designate by
 $\Pi_+\bu$ its restriction to $\rp$.
\begin{Def}
 The complete symbol space $Z(g_0)$ is the set of all functions $\zeta\in \Ll^2(\mathbb{R};\ve^\ast)$ such that $\Pi_+\zeta(t+\cdot)$ the restriction of
 $\zeta(t+\cdot)$ to $\rp$ belongs to $\omega(\Sigma(g_0))$ for any $t\in \mathbb{R}$.

For any $\zeta\in Z(g_0)$, the kernel $\mathbb{K}_\zeta$ is the
set of all weak solutions $\by$ to \eqref{2} with external force
$\zeta$ such that $\by\in \mathcal{F}_{loc}$ and
  $\by$ satisfies the energy inequality \eqref{NRJ} for almost everywhere $t,\tau\in \mathbb{R}$ with $t\ge \tau+1$.
For a translation bounded function $g_0\in
\Ll^2(\mathbb{R};\ve^\ast)$ we set
$$\mathbb{K}_{Z(g_0)}=\bigcup_{\zeta \in Z(g_0)}\mathbb{K}_\zeta. $$
\end{Def}
 As in \cite{VISHIK+CHEPYZHOV} we also have the following result.
\begin{thm}\label{TRAJ-THM2}
 Under the assumptions of Theorem \ref{TRAJ-THM},
\begin{equation}
\mathcal{A}_{\Sigma(g_0)}=\mathcal{A}_{\omega(\Sigma(g_0))},
=\Pi_{+}\bigcup_{\zeta
 \in Z(g_0)}\mathbb{K}_{\zeta}=\Pi_+ \mathbb{K}_{Z(g_0)}.
 \end{equation}

The kernel $\mathbb{K}_\zeta$ is not empty for any $\zeta \in Z(g_0)$, and the set $\mathbb{K}_{Z(g_0))}$ is bounded in $\mathcal{F}^a$ and compact in
   $\Theta^{loc}_+$.
\end{thm}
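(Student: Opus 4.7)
The plan is to emulate the machinery of Chepyzhov and Vishik. The identity $\mathcal{A}_{\Sigma(g_0)}=\mathcal{A}_{\omega(\Sigma(g_0))}$ is already delivered by Theorem \ref{TRAJ-THM}, so the task reduces to establishing the representation $\mathcal{A}_{\omega(\Sigma(g_0))}=\Pi_+ \mathbb{K}_{Z(g_0)}$ together with the non-emptiness, $\mathcal{F}^a$-boundedness, and $\Theta^{loc}_+$-compactness of $\mathbb{K}_{Z(g_0)}$.

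First I would show that $\mathbb{K}_\zeta\neq\emptyset$ for every $\zeta\in Z(g_0)$. Fix such a $\zeta$ and a sequence $\tau_n\to -\infty$. For each $n$, the shifted symbol $\Pi_+\zeta(\tau_n+\cdot)$ lies in $\omega(\Sigma(g_0))\subset \Sigma(g_0)$, so Theorem \ref{MAIN-EX} together with Lemma \ref{lem-traj2}(i) produces $\by_n\in \mck^+_{\zeta(\tau_n+\cdot)}$. Shifting back by $\tau_n$ and applying Lemma \ref{TRAJ-lem} (with the constant $R_0$ independent of $n$ thanks to the translation boundedness of $g_0$), the family $\by_n(\cdot-\tau_n)$ is uniformly bounded in $\mathcal{F}^a$ on every interval $[-T,T]$. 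A diagonal extraction then delivers a candidate $\by\in \mathcal{F}_{loc}$ defined on all of $\mathbb{R}$. Passage to the limit in the equation is carried out exactly as in Step~3 of the proof of Theorem \ref{MAIN-EX}: the linear term $\mathcal{A}\by$ and the bilinear term $\mb(\by,\by)$ are handled via Lemmas \ref{lem1}(ii), \ref{lem2}(iii) and the compactness inherent in $\Theta^{loc}$, while the non-Newtonian operator $\mathcal{A}_p$ is treated by Minty's monotonicity trick. Passage to the limit in the energy inequality \eqref{NRJ} is done as in the proof of Lemma \ref{lem-traj2}(i), invoking Lemma \ref{lem-ViC}, weak lower semicontinuity of the $\ve$-norm, and the $L^2_{loc}$-strong convergence of $|\by_n|$.

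Next, Lemma \ref{TRAJ-lem} applied on every unit interval $[\tau,\tau+1]\subset \mathbb{R}$ with $\by\in\mathbb{K}_\zeta$ (which is defined on all of $\mathbb{R}$) yields $\|\by\|_{\mathcal{F}^a}\le R_0$ with $R_0$ independent of $\zeta\in Z(g_0)$. Hence $\mathbb{K}_{Z(g_0)}$ is $\mathcal{F}^a$-bounded and therefore precompact in $\Theta^{loc}$; closedness follows from the same limiting procedure sketched above applied to a convergent sequence of complete trajectories, so $\mathbb{K}_{Z(g_0)}$ is compact in $\Theta^{loc}$ (and its $\Pi_+$-image in $\Theta^{loc}_+$). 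For the inclusion $\Pi_+\mathbb{K}_{Z(g_0)}\subset \mathcal{A}_{\omega(\Sigma(g_0))}$, I observe that $\Pi_+\mathbb{K}_{Z(g_0)}$ is $\mathcal{F}^a_+$-bounded and strictly invariant under $\{\mathbb{S}_t\}$ (by invariance \eqref{inv-ga} of $\omega(\Sigma(g_0))$ and the translation coordination of $\{\mck_g^+\}$), hence it is attracted by the trajectory attractor and therefore contained in it. For the opposite inclusion, given $\by\in\mathcal{A}_{\omega(\Sigma(g_0))}$, strict invariance lets me pick, for each $n$, a preimage $\by_n\in\mathcal{A}_{\omega(\Sigma(g_0))}\cap\mck^+_{g_n}$ with $g_n\in\omega(\Sigma(g_0))$ and $\mathbb{S}_n\by_n=\by$. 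Concatenating these over intervals $[-n,\infty)$ and reapplying the diagonal extraction from Step~1, using \eqref{inv-ga} to identify the limiting external force as an element of $Z(g_0)$, I obtain $\widetilde{\by}\in\mathbb{K}_{Z(g_0)}$ with $\Pi_+\widetilde{\by}=\by$.

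The main obstacle, in my view, is the limiting procedure used to construct complete trajectories: one has to pass simultaneously to the limit in the PDE and in the non-trivial energy inequality \eqref{NRJ}, whose right-hand side depends on the symbol through a supremum $\Omega_\lambda(g)$ that must be controlled uniformly along the shifted sequence. The absence of uniqueness prevents any direct semi-group argument, so the Minty monotonicity trick (as in the existence proof) has to be embedded into the diagonal extraction, and the lower semicontinuity step for the $\ve$-norm has to be coupled with the almost everywhere strong convergence of $|\by_n|$ on compact time intervals. Once these two passages are secured, the remaining assertions are bookkeeping via \cite[Section~4]{VISHIK+CHEPYZHOV}.
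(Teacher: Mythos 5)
Your proposal is essentially the argument the paper has in mind: the paper omits the proof altogether, deferring to \cite[Theorem 4.1]{VISHIK+CHEPYZHOV}, and your sketch is a faithful reconstruction of that argument (complete trajectories obtained by diagonal extraction from backward-shifted solutions with the uniform bound of Lemma \ref{TRAJ-lem}, Minty's monotonicity trick and lower semicontinuity to pass to the limit in the equation and in the energy inequality \eqref{NRJ}, and the two inclusions for $\mathcal{A}_{\omega(\Sigma(g_0))}=\Pi_+\mathbb{K}_{Z(g_0)}$ via invariance and attraction). The only step worth making explicit is that the uniform bound $\|\by\|_{\mathcal{F}^a}\le R_0$ for an arbitrary $\by\in\mathbb{K}_\zeta$ requires discarding the transient term $C_0\|\by\|^2_{L^\infty(\tau,\tau+1;\h)}e^{-\lambda(t-\tau)}$ as $\tau\to-\infty$, which needs subexponential backward growth of $\by$; this is automatic for the trajectories you construct and is implicitly part of the definition of the kernel in \cite{VISHIK+CHEPYZHOV}.
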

\begin{proof}
We omit the proof of this result as it is very similar to the proof of \cite[Theorem 4.1]{VISHIK+CHEPYZHOV}.
\end{proof}
From the construction we made in Section \ref{Prel} that
$\mathcal{A}$ is self-adjoint and thanks to Rellich's theorem it
is compact on $\h$.
 Therefore we can define the fractional power operators $\mathcal{A}^\delta$ with domain $D(\mathcal{A}^\delta)$ for $\delta \in \mathbb{R}$.
For any reflexive Banach space $X$ and $Y$ let $$W_{p_0, p_1}(0,T,
X, Y)=\left\{\bu(s), s\in [0,T]: \bu \in L^{p_0}(0,T;X),
\frac{\partial \bu}{\partial t} \in L^{p_1}(0,T;Y)\right\},$$
which is a Banach space when endowed with the graph norm. Taking
$\delta\in [0,1/2)$ implies that the embedding  $\h\subset
D(\mathcal{A}^{-\delta})$ is compact and
$D(\mathcal{A}^{-\delta})\subset \ve^\ast$ is continuous.
 Therefore, owing to \cite[Corollary 4]{Simon} the space $W_{\infty, 2}(0,T, \h, \ve^\ast)$ is compactly embedded in $C(0,T; D(\mathcal{A}^{-\delta}))$.
Since $\ve\subset D(\mathcal{A}^\delta)$ is compact then by invoking \cite[Lemma 5.1]{LIONS}
we see that $W_{2,2}(0,T, \ve, \ve^\ast)$ is compact in $L^2(0,T; D(\mathcal{A}^\delta))$. The consequence of these facts and Theorem \ref{TRAJ-THM2} are given below.
\begin{cor}
 For any set $\mathbb{B}\subset\mck^+_{\Sigma(g_0)}$ which is bounded in $\mfa$ we have
\begin{align}
 \dist_{C(0,T; D(\mathcal{A}^{-\delta}))}(\Pi_{0,T}\mathbb{S}_T\mathbb{B}, \Pi_{0,T}\mathbb{K}_{Z(g_0)})\rightarrow 0 \text{ as } t\rightarrow \infty,\\
 \dist_{L^2(0,T; D(\mathcal{A}^{\delta}))}(\Pi_{0,T}\mathbb{S}_T\mathbb{B}, \Pi_{0,T}\mathbb{K}_{Z(g_0)})\rightarrow 0 \text{ as } t\rightarrow \infty.
\end{align}
\end{cor}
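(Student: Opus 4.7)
The plan is to argue by contradiction, reducing the corollary to the attraction property of Theorem \ref{TRAJ-THM} (identified via Theorem \ref{TRAJ-THM2} with $\Pi_+\mathbb{K}_{Z(g_0)}$). The core observation is that convergence in the weak topology $\thl$, when combined with uniform boundedness in $\mfa$ on $[0,T]$, can be upgraded to strong convergence in $C(0,T;D(\mathcal{A}^{-\delta}))$ and $L^2(0,T;D(\mathcal{A}^{\delta}))$ via the compact embeddings stated immediately before the corollary.

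Concretely, suppose the first convergence fails. Then there exist $\varepsilon>0$, a sequence $t_n\to\infty$ and $\by_n\in\mathbb{B}$ with
$$\inf_{\mathbf{z}\in\mathbb{K}_{Z(g_0)}}\|\Pi_{0,T}\mathbb{S}_{t_n}\by_n-\Pi_{0,T}\mathbf{z}\|_{C(0,T;D(\mathcal{A}^{-\delta}))}\ge\varepsilon.$$
Since $\mathbb{B}$ is bounded in $\mfa$, Lemma \ref{TRAJ-lem} (applied with $\by_n$ in place of $\by$, using that $\|\by_n\|_{L^\infty(0,1;\h)}$ is controlled by $\|\by_n\|_{\mfa}$) yields a bound on $\|\mathbb{S}_{t_n}\by_n\|_{\mfa}$ uniform in $n$ once $t_n\ge 1$. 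Restricted to $[0,T]$, the family $\Pi_{0,T}\mathbb{S}_{t_n}\by_n$ is therefore uniformly bounded in $W_{\infty,2}(0,T;\h,\ve^\ast)$ and in $W_{2,2}(0,T;\ve,\ve^\ast)$.

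By Theorem \ref{TRAJ-THM}, the set $\Pi_+\mathbb{K}_{Z(g_0)}=\mathcal{A}_{\Sigma(g_0)}$ attracts $\mathbb{S}_t\mathbb{B}$ in $\thl$ and is itself compact in $\thl$. Hence, after passing to a subsequence, there exist $\mathbf{z}_n\in\mathbb{K}_{Z(g_0)}$ such that $\Pi_{0,T}\mathbb{S}_{t_n}\by_n-\Pi_{0,T}\Pi_+\mathbf{z}_n$ tends to zero weakly in $L^2(0,T;\ve)$, weakly-star in $L^\infty(0,T;\h)$ and with time derivatives converging weakly in $L^2(0,T;\ve^\ast)$. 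The uniform $\mfa$-bounds together with the compact embeddings $W_{\infty,2}(0,T;\h,\ve^\ast)\subset C(0,T;D(\mathcal{A}^{-\delta}))$ and $W_{2,2}(0,T;\ve,\ve^\ast)\subset L^2(0,T;D(\mathcal{A}^{\delta}))$ then upgrade this to strong convergence, contradicting the choice of $(t_n,\by_n)$. The second displayed convergence is obtained identically from the second compact embedding.

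The only non-routine ingredient is the uniform $\mfa$-bound on $\mathbb{S}_{t_n}\by_n$; this is precisely why the exponential decay estimates of Lemma \ref{TRAJ-lem} were proved. Once that is in hand, the proof reduces to the standard conversion of weak trajectory attraction into strong attraction via the Aubin--Lions type compactness recalled just above the corollary, and no new estimates on the nonlinear terms $\mathcal{A}_p$ or $\mathcal{B}$ are needed.
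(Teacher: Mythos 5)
Your proposal is correct and follows essentially the same route the paper intends: the paper presents the corollary as a direct consequence of the compact embeddings $W_{\infty,2}(0,T,\h,\ve^\ast)\subset\subset C(0,T;D(\mathcal{A}^{-\delta}))$ and $W_{2,2}(0,T,\ve,\ve^\ast)\subset\subset L^2(0,T;D(\mathcal{A}^{\delta}))$ combined with the weak attraction of Theorems \ref{TRAJ-THM} and \ref{TRAJ-THM2}, and your contradiction argument (uniform $\mfa$-bounds from Lemma \ref{TRAJ-lem}, weak attraction in $\thl$, then upgrading to strong convergence via Aubin--Lions/Simon compactness) is exactly the standard way of filling in that step.
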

In the corollary $\Pi_{0,T}\bu$ stands for the restriction to $[0,T]$ of a function $\bu$ defined on $\mathbb{R}$ (or $\rp$) and $\dist(X,Y)$ denotes the distance between the set
$X$ and $Y$.
\section{Acknowledgement}
We wish to thank the anonymous referee for his/her insightful comments which considerably improved this article. 
The author is very grateful to the support he received from the Austrian Science Foundation. 

\end{document}